\newtheorem{Thm}{Theorem}
\newtheorem{Cor}{Corollary}
\newtheorem{Lem}{Lemma}
\numberwithin{equation}{section}
\newcommand{\dd}{\displaystyle}
\newcommand{\nn}{{\bf{n}}}
\newcommand{\K}{{\text{K}}}
\def\RR{{\bold R}}
\def\CC{{\bold C }}
\newcommand{\e}{{\text {e}}}
\newcommand{\eqr}[1]{(\ref{#1})}
\begin{document}

\title[Minimal Lamination with Singularities on a Line Segment]
{A Minimal Lamination of the Unit Ball with Singularities along a Line Segment}

\author{Siddique Khan}%
\address{Department of Mathematics\\
Johns Hopkins University\\
3400 N. Charles St.\\
Baltimore, MD 21218}
\thanks{The author would like to thank Bill Minicozzi, Christine Breiner and Hamid Hezari for many helpful discussions which were instrumental in developing the example in this paper.}
\subjclass{53A10, 49Q05}
\email{siddique@alum.mit.edu}

\begin{abstract}
We construct a sequence of compact embedded minimal disks in the unit ball in Euclidean 3-space whose boundaries are in the boundary of the ball and where the curvatures blow up at every
point of a line segment of the vertical axis, extending from the origin. We further study the
transversal structure of the minimal limit lamination and find removable singularities along the line segment and a non-removable singularity at the origin. This
extends a result of Colding and Minicozzi where they constructed a
sequence with curvatures blowing up only at the center of the
ball, Dean's construction of a sequence with curvatures
blowing up at a prescribed discrete set of points, and the classical case of
the sequence of re-scaled helicoids with curvatures blowing up
along the entire vertical axis.
\end{abstract}

\maketitle

\section{Introduction}
In the study and classification of Minimal Surfaces an important question is what are the
possible singular sets for limits of sequences of embedded minimal surfaces.
The global problem in $\RR^3$ is understood by results of Colding
and Minicozzi \cite{CM2}-\cite{CM5}, where all singularities are
removable at least when the sequence is simply connected, and by the work of Meeks and Rosenberg in \cite{MR} where they explain why the singular set is perpendicular to the limit foliation.

In contrast, for the local case in $\RR^3$, Colding and Minicozzi in
\cite{CM1} prove the existence of a sequence of embedded minimal
disks with boundaries in a sphere and with curvatures blowing up
only at the center of the ball, where there is a non-removable
singularity. A result of Dean, \cite{BD}, extends this example by
constructing a sequence with curvatures blowing up at a prescribed
discrete set of points. There is also the well known case of
the sequence of re-scaled helicoids with curvature blowing up along the entire $x_3$-axis.
Meeks and Weber \cite{MW} construct singular sets that are properly embedded $C^{1,1}$-curves.
Hoffman and White \cite{HW}, and Calle and Lee \cite{CL} also give a variational way to compute examples.

In this paper we construct a sequence of compact embedded minimal
disks in the unit ball in $\RR^3$ whose boundaries are in
the boundary of the ball and where the curvatures blow up at every
point of a line segment of the negative $x_3$-axis. This sequence converges to a minimal limit lamination.
We study the transversal structure of the limit lamination and find a foliation by parallel planes in the lower hemisphere and a leaf in the upper hemisphere that spirals into the $\{x_3=0\}$ plane, such that there are removable singularities at every point along the line segment of the negative $x_3$-axis but the singularity at the origin cannot be removed.

We will follow the structure of Colding and Minicozzi's result in \cite{CM1}. The key difference in our
approach here is that we alter the domain and the Weierstrass data
used in \cite{CM1} to create not just one singularity converging
to the origin, but a sequence of singularities that converges to a
line segment extending from the origin. In addition, the construction of the limit lamination uses a convergence result from \cite{CM5}. And a Bernstein-type theorem is used to obtain the foliation by parallel planes in the lower hemisphere of the unit ball.

Our main result is Theorem \ref{t:main} below, which constructs our sequence of compact embedded minimal disks in the unit ball with boundaries in the boundary of the ball and describes the limit lamination. Theorem \ref{t:main1} first constructs a sequence of compact embedded minimal disks with the necessary curvature and boundary properties.

\begin{Thm} \label{t:main1}
There exists  a sequence of compact embedded minimal disks $0 \in
M_N \subset \RR^2 \times [-1/2,1/2] \subset \RR^3$, each containing the vertical segment $\{ (0,0,t) \, | \, |t| \leq 1/2 \}
\subset M_N$, with the following properties:
\begin{enumerate}
\item[(a)] \label{i:1} $\displaystyle \forall p \in \{ (0,0,t) \,
| \, -1/2 \leq t \leq 0 \}, \, \lim_{N\to \infty} |A_{M_N}|^2
(p) = \infty$.

\item[(b)] \label{i:3} $M_N \setminus \{(0,0,t) |\, |t| \leq 1/2 \} = {M}_{1,N} \cup {M}_{2,N}$
 for multi-valued graphs ${M}_{1,N} , \, {M}_{2,N}$ over the $\{x_3=0\} \setminus \{0\}$ punctured plane.

\item[(c)] \label{i:2} $\sup_N \sup_{M_N \setminus B_{\delta}
} |A_{M_N}|^2 = C_{\delta} < \infty$ for all $\delta > 0$ and some constant $C_{\delta}$ depending on $\delta$, and where $B_{\delta}$ is a ${\delta}$-neighborhood of $\{ (0,0,t) \, | \, -1/2\leq t \leq 0 \}$ .

\item[(d)] The boundary $\partial M_N$ lies outside a fixed cylinder $\{ (x_1, x_2, x_3) |\, x_1^2+x_2^2 \leq r_0^2, -1/2 < x_3 < 1/2 \}$ where $r_0$ does not depend on $N$. Also, in each horizontal slice $\{x_3=t\} \cap M_N,$ for  $-1/2 \leq t \leq 0$ (i.e. below the $\{x_3=0\}$ plane) the distance from the $x_3$-axis to $\partial M_N$ goes to infinity as $N \to \infty$.
\end{enumerate}
\end{Thm}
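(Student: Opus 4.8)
The plan is to mimic the Weierstrass-representation construction of Colding--Minicozzi in \cite{CM1}, but with a modified domain and Gauss map chosen so that the ``bad'' points of the immersion accumulate along a segment rather than at a single point. Recall that in \cite{CM1} one takes a domain $\Omega$ obtained from a disk by cutting a slit, and Weierstrass data $(g, dh)$ of the form $g = e^{i(u+iv)}$ with $dh = dz$, where the single parameter is tuned so that the conformal factor degenerates near $0$ and the resulting minimal immersion develops a neck that pinches as $N \to \infty$. Here I would instead build, for each $N$, a domain $\Omega_N$ that is a long thin rectangle-type region (a strip in a logarithmic coordinate) and prescribe the height differential $dh$ and the log of the Gauss map so that the immersion $F_N = \RE \int (\frac12(g^{-1}-g), \frac{i}{2}(g^{-1}+g), 1)\, dh$ has $N$ separate pinching necks whose images lie at heights $t_1^N > t_2^N > \dots > t_N^N$ filling up $[-1/2, 0]$ as $N \to \infty$. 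The vertical segment is arranged to be the core of this stack of necks, so it sits in every $M_N$ by construction.

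The key steps, in order, are: (1) write down explicitly the domain $\Omega_N$ and the Weierstrass data, choosing the free parameters (an amplitude and a frequency, plus $N$ translation parameters) so that $g$ winds many times near each prescribed height and the metric $\frac12(|g|+|g|^{-1})^2|dh|^2$ is complete away from the slit; (2) verify that $F_N$ is a well-defined conformal minimal immersion of $\Omega_N$ — this requires checking that the periods of the Weierstrass one-forms vanish (so the immersion closes up), which is where the precise choice of data matters; (3) prove embeddedness of $M_N = F_N(\Omega_N)$, by showing the multi-valued-graph pieces $M_{1,N}, M_{2,N}$ stay on opposite sides of the necks and that distinct sheets do not collide — this is the standard but delicate separation estimate, controlled by making the necks very ``flat'' (small gradient) outside a small cylinder; (4) read off the curvature blow-up (a) from the fact that each neck has diameter $\to 0$ while spanning a fixed vertical extent, forcing $|A|^2 \to \infty$ at the corresponding point of the segment, and in the limit at every point of $\{-1/2 \le t \le 0\}$ since the heights $t_i^N$ become dense; (5) obtain the uniform bound (c) away from $B_\delta$ from interior curvature estimates for embedded minimal disks (Colding--Minicozzi) once one knows $M_N$ is a disk and the geometry outside $B_\delta$ is uniformly graphical; (6) verify (b) and the boundary statement (d) directly from the asymptotics of the Weierstrass data near the boundary of $\Omega_N$, where $g$ is close to a constant so $M_N$ is graphical, and where the conformal radius is chosen to grow with $N$ so the boundary curve recedes to infinity in each horizontal slice below the origin.

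The main obstacle I expect is step (3) together with the bookkeeping in step (1): arranging a single domain and single pair of Weierstrass one-forms that simultaneously produces $N$ well-separated necks, has all periods vanishing, gives an \emph{embedded} surface, and has the boundary controlled as in (d). In \cite{CM1} the period and embeddedness conditions are already the crux for one neck; stacking $N$ of them means the parameters controlling consecutive necks interact, and one must show the separation between sheets coming from neck $i$ and neck $i+1$ does not degrade as $N \to \infty$. I would handle this by taking the necks to be geometrically self-similar and well-spaced on a logarithmic scale — essentially gluing $N$ rescaled copies of the one-neck building block of \cite{CM1} end to end — so that the embeddedness estimate for the whole surface reduces to the single-neck estimate applied on each scale, with the scales chosen far enough apart that cross-terms are negligible. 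The completeness/boundary condition (d) is then automatic because the rescaling that separates the necks also pushes $\partial \Omega_N$ out to large conformal, hence (for the graphical boundary region) large Euclidean, distance from the axis.
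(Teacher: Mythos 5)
Your overall strategy --- a Colding--Minicozzi-style Weierstrass construction $g = e^{ih_N}$, $\phi = dz$ on a thin domain around a segment, with the data modified so that the degeneration spreads along $[-1/2,0]$ --- is indeed the paper's strategy. But the concrete mechanism you propose for producing the degeneration has a gap that would defeat part (a). You propose $N$ geometrically self-similar necks at heights $t_1^N > \cdots > t_N^N$, well-spaced so that ``the embeddedness estimate reduces to the single-neck estimate applied on each scale, with the scales chosen far enough apart that cross-terms are negligible,'' and you then claim (a) follows ``since the heights $t_i^N$ become dense.'' Density of the blow-up heights is not enough: part (a) demands $|A_{M_N}|^2(p) \to \infty$ at every \emph{fixed} $p$ on the segment, and if the necks are well separated relative to their own scale, then at a point midway between consecutive necks the curvature stays uniformly bounded --- this is exactly Dean's situation \cite{BD}, where blow-up occurs only at the prescribed discrete set. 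The two requirements (decoupled necks for easy embeddedness; blow-up everywhere) pull in opposite directions, and your plan resolves the tension in the direction that loses (a).

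The paper threads this needle with a single explicit formula rather than a gluing: $\partial_z h_N(z) = \frac12\big[\frac{1}{(z^2+N^{-2})^2} + \frac1N\sum_{k=1}^N \frac{1}{((z+k/N)^2+N^{-2})^2}\big]$ on a simply connected thin neighborhood $\Omega_N$ of the segment $[-1/2,1/2]$. The poles sit at $-k/N \pm i/N$: their horizontal spacing and their distance to the real axis are both $1/N$, so every $x \in [-1/2,0]$ lies within $1/N$ of some pole, the corresponding term contributes at least $N^4/4$, and after the $1/N$ normalization one still gets $|\partial_z h_N| \geq N^3/8 \to \infty$, hence $|K_N| = |\partial_z h_N|^2/\cosh^4 v_N \to \infty$ along the whole segment (where $v_N \equiv 0$). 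The same normalization, together with the taper of $\Omega_N$, keeps $|u_N(x,y)-u_N(x,0)| < 1$, which is how embeddedness is proved: $x_3(F_N(x,y)) = x$ and each curve $F_N(x,\cdot)$ is a graph in its horizontal plane because its tangent never turns by $\pi/2$. Two further remarks: the period-vanishing issue you flag as a crux does not arise at all, since $\Omega_N$ is simply connected and $g$ is nowhere zero or infinite; and (d) is not obtained by enlarging the conformal size of the domain (the width of $\Omega_N$ below $x=0$ actually shrinks like $N^{-5/2}$) but from the lower bound $\partial_y v_N \gtrsim N^3$, which forces the horizontal slices to have length at least of order $N^{-5/2}\cosh(cN^{1/2}) \to \infty$.
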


Figure \ref{multipleslices} shows a diagram of horizontal slices of ${M}_{1,N} , \, {M}_{2,N}$.
\begin{center}
\begin{figure}[ht]
 \includegraphics[width=\textwidth]{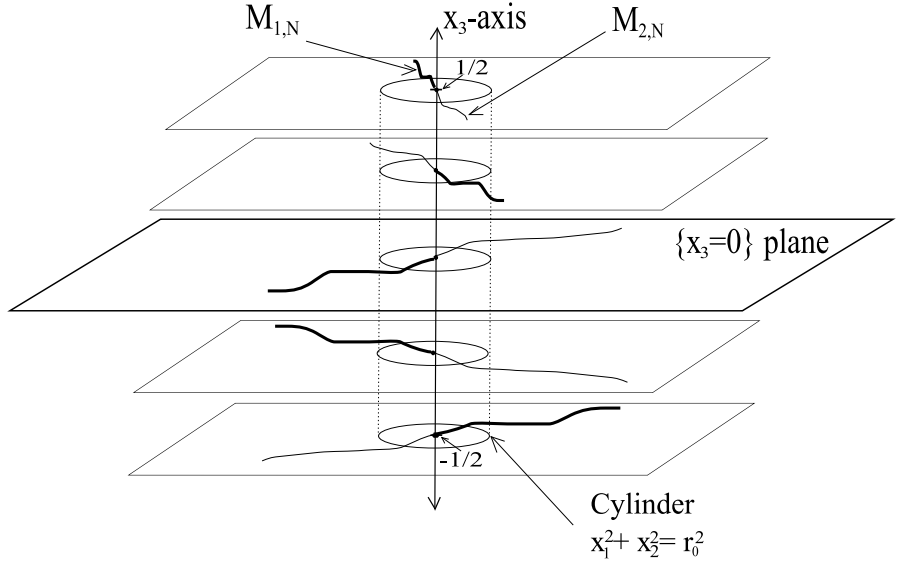}
 \caption{Horizontal slices of $M_N\setminus \{(0,0,t) |\, |t| \leq 1/2 \} = {M}_{1,N} \cup {M}_{2,N}$ in Corollary \ref{c:corembeddings}}\label{multipleslices}
\end{figure}
\end{center}

The boundary properties of the sequence in Theorem \ref{t:main1}(d) allow us to intersect with a smaller ball that is contained in the cylinder, pass to a subsequence, and then scale to obtain the sequence in Theorem \ref{t:main} below that has the same properties (a), (b) and (c). Theorem \ref{t:main} further describes the convergence of this sequence to a limit lamination of the unit ball with singularities along the line segment $\{ (0,0,t) \, | \, -1\leq t \leq 0 \}$.

\begin{Thm} \label{t:main}
There exists  a sequence  of compact embedded minimal disks $0 \in
\Sigma_N \subset  B_{1} \subset \RR^3$
 with $\partial \Sigma_N \subset
\partial B_{1}$ and each
containing the vertical segment $\{ (0,0,t) \, | \, |t| \leq 1 \}
\subset \Sigma_N$ with the following properties:
\begin{enumerate}
\item[(a)] \label{i:21} $\displaystyle \forall p \in \{ (0,0,t) \,
| \, -1\leq t \leq 0 \}, \, \lim_{N\to \infty} |A_{\Sigma_N}|^2
(p) = \infty$.

\item[(b)] \label{i:23} $\Sigma_N \setminus \{(0,0,t) |\, |t| \leq 1 \} = {\Sigma}_{1,N} \cup {\Sigma}_{2,N}$
 for multi-valued graphs ${\Sigma}_{1,N} , \, {\Sigma}_{2,N}$ over the $\{x_3=0\} \setminus \{0\}$ punctured plane.

\item[(c)] \label{i:22} $\sup_N \sup_{\Sigma_N \setminus B_{\delta}
} |A_{\Sigma_N}|^2 < \infty$ for all $\delta > 0$, where
$B_{\delta}$ is a ${\delta}$-neighborhood of $\{ (0,0,t) \, | \, -1\leq t \leq 0 \}$ .

\end{enumerate}

\noindent This sequence of compact embedded minimal disks converges to a minimal lamination of $B_{1}
\setminus \{ (0,0,t) \, | \, -1\leq t \leq 0 \}$ consisting of a foliation by parallel planes of the lower hemisphere below $\{x_3=0\}$ and one leaf in the upper hemisphere, $\Sigma$, such that $\Sigma \setminus \{x_3-\text{axis}\}= \Sigma' \cup \Sigma''$, where $\Sigma'$ and $\Sigma''$ are multi-valued graphs, each of which spirals into $\{x_3=0\}$. This limit lamination has removable singularities along the line segment $\{ (0,0,t) \, | \, -1\leq t \leq 0 \}$ of the negative $x_3$-axis but the singularity at the origin cannot be removed.
\end{Thm}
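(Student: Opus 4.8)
The plan is to derive Theorem~\ref{t:main} from Theorem~\ref{t:main1} by a homothety, a compactness step that produces the limit lamination, and then a separate analysis of the limit above and below $\{x_3=0\}$, finishing with the discussion of the singular set. \textbf{Normalization.} By Theorem~\ref{t:main1}(d) the boundaries $\partial M_N$ all avoid the fixed cylinder $\{x_1^2+x_2^2\le r_0^2,\ -1/2<x_3<1/2\}$, so I would fix $r>0$ with $\overline{B_r}$ contained in that cylinder and small enough that for every $N$ the component $\Sigma_N^0$ of $M_N\cap B_r$ containing the origin is an embedded minimal disk. Since $B_r$ misses $\partial M_N$, one then has $\partial\Sigma_N^0\subset\partial B_r$, and $\Sigma_N^0$ contains $\{(0,0,t)\,|\,|t|\le r\}$. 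Setting $\Sigma_N:=\tfrac1r\Sigma_N^0\subset B_1$ gives compact embedded minimal disks with $\partial\Sigma_N\subset\partial B_1$ that contain $\{(0,0,t)\,|\,|t|\le1\}$. Property (a) holds because $|A_{\Sigma_N}|^2(x/r)=r^2|A_{M_N}|^2(x)$, so the blow-up along the segment persists; (b) holds because a homothety and the passage to a connected component both respect the multi-valued graph structure over $\{x_3=0\}\setminus\{0\}$; and (c) is the rescaled form of Theorem~\ref{t:main1}(c), using that for $x\in B_r$ the distance from $x$ to $\{(0,0,t)\,|\,-r\le t\le0\}$ equals its distance to $\{(0,0,t)\,|\,-1/2\le t\le0\}$. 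I pass to a subsequence whenever needed.

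\textbf{The limit lamination.} For every compact $K\subset B_1\setminus\{(0,0,t)\,|\,-1\le t\le0\}$, property (c) gives $\sup_N\sup_K|A_{\Sigma_N}|<\infty$, so by the convergence result for sequences of embedded minimal disks with locally bounded second fundamental form from \cite{CM5}, a subsequence of $\Sigma_N$ converges in $C^\infty_{\mathrm{loc}}$, with locally bounded multiplicity, to a minimal lamination $\cL$ of $B_1\setminus\{(0,0,t)\,|\,-1\le t\le0\}$. The lamination is nonempty because the whole vertical segment lies in every $\Sigma_N$, and by (a) the set $\{(0,0,t)\,|\,-1\le t\le0\}$ is exactly where the convergence fails to have locally bounded curvature.

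\textbf{The leaves.} Below $\{x_3=0\}$, property (a) forces the pair of multi-valued graphs of (b) to acquire an unbounded number of sheets while their heights stay bounded; consecutive sheets thus have separation tending to $0$, so by the gradient estimates for multi-valued minimal graphs the sheets flatten, and by Theorem~\ref{t:main1}(d) (used before rescaling) they remain defined out to the boundary sphere after rescaling. Combining these, a Bernstein-type theorem forces every leaf of $\cL$ inside $\{x_3<0\}$ to be a horizontal plane, and since the sheets realize all heights, $\cL$ restricted to $B_1\cap\{x_3<0\}$ is exactly the foliation by the planes $\{x_3=c\}\cap B_1$, $-1<c<0$ (the negative $x_3$-axis being the axis about which the $\Sigma_N$ twist there). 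Above $\{x_3=0\}$ the curvature of $\Sigma_N$ is locally bounded except near the origin, and by the structure of the disks $M_N$ arising from the altered domain and Weierstrass data (Theorem~\ref{t:main1}) the part of $\Sigma_N$ above $\{x_3=0\}$ converges to a single leaf $\Sigma$; away from the $x_3$-axis $\Sigma=\Sigma'\cup\Sigma''$ with $\Sigma',\Sigma''$ the limits of $\Sigma_{1,N},\Sigma_{2,N}$, each a multi-valued graph over $\{x_3=0\}\setminus\{0\}$, and since these graphs acquire unboundedly many sheets only as one descends toward the origin, $\Sigma'$ and $\Sigma''$ each spiral into $\{x_3=0\}$.

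\textbf{The singular set, and the main obstacle.} For $-1\le t_0<0$ the point $(0,0,t_0)$ is an interior point of the leaf $\{x_3=t_0\}$ of the foliation just obtained, so $\cL$ extends smoothly across $(0,0,t_0)$ by adjoining that point to that leaf; thus, although $|A_{\Sigma_N}|^2(0,0,t_0)\to\infty$, the limit lamination is smooth there and the singularity is removable, and likewise along the entire half-open segment. At the origin, any leaf of a hypothetical extension would have to coincide near $0$ with $\lim_{c\to0^-}(\{x_3=c\}\cap B_1)=\{x_3=0\}\cap B_1$; but $\Sigma$ spirals into $\{x_3=0\}$ with its sheets becoming vertical near the $x_3$-axis as $x_3\to0^+$, and such sheets cannot coexist with the horizontal leaf $\{x_3=0\}$ in a smooth minimal lamination of a neighborhood of $0$ (equivalently, the second fundamental form of any such extension would be unbounded near $0$), so the singularity at the origin is not removable. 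I expect the main obstacle to be the lower-hemisphere step: one has to set up the rescaling so that the Bernstein-type theorem genuinely applies to each limiting leaf, which requires extracting from (b) and (d) uniform control on both the number of sheets and the gradient of the flattening multi-valued graphs, and then showing that the resulting planes fill the whole lower hemisphere and not merely a neighborhood of the axis; the verification that exactly one leaf survives above $\{x_3=0\}$ is the other delicate point, and it is there that the specific construction underlying Theorem~\ref{t:main1} is essential.
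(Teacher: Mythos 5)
Your overall strategy -- rescale into a small ball using Theorem \ref{t:main1}(d), extract a limit lamination via the compactness result of \cite{CM5} and the curvature bound (c), show spiralling above $\{x_3=0\}$ and planes below via a Bernstein-type theorem, and deduce non-removability at the origin from the clash between the spiralling leaf and the limiting plane $\{x_3=0\}$ -- is exactly the paper's strategy. But the step you yourself flag as ``the main obstacle,'' the lower-hemisphere foliation, is left as a heuristic (``consecutive sheets have separation tending to $0$, so by gradient estimates the sheets flatten''), and this is a genuine gap: small sheet separation does not by itself put you in a position to apply a Bernstein theorem, and a minimal graph that is merely ``defined out to the boundary sphere after rescaling'' need not be planar. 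The paper closes this gap with a quantitative input from the Weierstrass data: on $[-1/2,0]$ one has $\partial_x u_N(x,0)\ge \tfrac18 N^3$, so the piece $\Gamma_{j,N}(t)$ of $M_{j,N}$ on which the tangent direction at the axis turns through a fixed angle $4\pi$ lies in a slab $\{t\le x_3\le t+\epsilon_N\}$ with $\epsilon_N\le 48\pi/N^3\to 0$; this piece has a bounded number of sheets, and by the second half of Theorem \ref{t:main1}(d) its horizontal slices reach out to distance tending to infinity \emph{before} rescaling. Hence (after the $C^k$ convergence supplied by the paper's Lemma \ref{convergencelem}(b), itself resting on Corollary 16.7 of \cite{GT}) the limit of $\Gamma_{j,N}(t)$ is an \emph{entire} minimal graph minus a point, and only then does Bernstein force it to be the plane $\{x_3=t\}$. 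Without some such quantitative control on the winding rate per unit height, your argument does not produce the foliation.

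Two smaller omissions: your claim that $\Sigma'$, $\Sigma''$ spiral into $\{x_3=0\}$ is asserted qualitatively, whereas the paper derives it from the explicit lower bound $\lim_N|u_N(2t,0)-u_N(t,0)|\ge \tfrac{7}{48t^3}$, which blows up as $t\to 0^+$; and you do not address why the spiralling has multiplicity one (the paper invokes a Harnack inequality, Proposition II.2.12 of \cite{CM2}, to bound the vertical separation of consecutive sheets from below on compact subsets above $\{x_3=0\}$). These are needed to make the description of the upper leaf, and hence the non-removability argument at the origin, rigorous.
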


Figure \ref{myexample} shows a schematic picture of this limit lamination.

\begin{center}
\begin{figure}[ht]
 \includegraphics[width=0.75\textwidth]{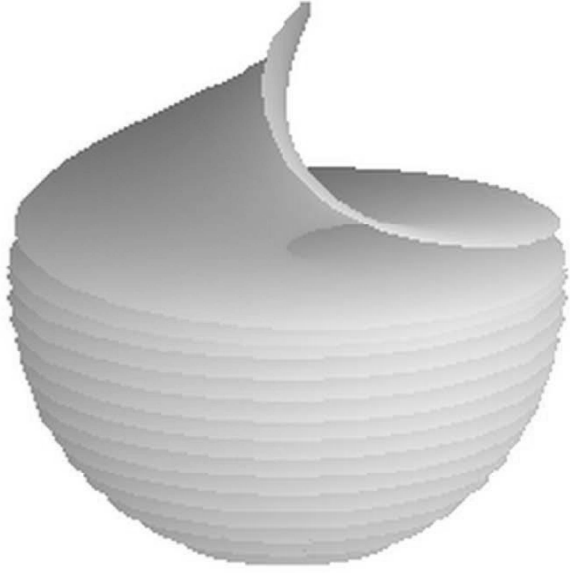}
 \caption{Schematic picture of the limit lamination in Theorem \ref{t:main}}\label{myexample}
\end{figure}
\end{center}

\section{Notation}

In this paper we will use standard $(x_1 , x_2 , x_3)$ coordinates
on $\RR^3$ and $z=x+i\, y$ on $\CC$.  Given a function $f:\CC \to
\CC^n$, we will use $\partial_x f$ and $\partial_y f$ to denote
$\frac{\partial f}{\partial x}$ and $\frac{\partial f}{\partial
y}$, respectively. Also, $\partial_z f =  (\partial_x f - i
\partial_y f) /2$.

For $p \in \RR^3$ and $s> 0$, the ball of radius $s$ in $\RR^3$ will be denoted by
$B_s(p)$.
$\K_{\Sigma}$ is the sectional curvature of a smooth
surface $\Sigma$. When $\Sigma$ is immersed in $\RR^3$, we let its second fundamental form be $A_{\Sigma}$ (hence when $\Sigma$ is minimal, $|A_{\Sigma}|^2=-2\,\K_{\Sigma}$). When $\Sigma$
is oriented, we let $\nn_{\Sigma}$ be the unit normal.

\section{The Weierstrass representation}

Given a meromorphic function $g(z)$ and a holomorphic one-form
$\phi(z)$, defined on a domain $\Omega$,
the Weierstrass representation of a conformal minimal immersion, $F:\Omega \rightarrow \mathbb{R}^3$, is given by (see \cite{Os}, Lemma 8.2):\\
\begin{equation} \label{weierstrassrep} F(z) = \text{Re} \int_{\zeta \in
\gamma_{z_0,z}} \left(\frac{1}{2}(g^{-1}(\zeta)-g(\zeta)),
\frac{i}{2}(g^{-1}(\zeta)+g(\zeta)),1 \right) \phi(\zeta)
\end{equation},
where $z_0$ is a fixed base point in $\Omega$ and the integration
is taken along a path $\gamma_{z_0,z}$ from $z_0$ to $z$ in
$\Omega$. In this paper we set the base point $z_0=0$, we choose
$\phi$ such that it has no zeros and $g$ such that it has no poles
or zeros in $\Omega$ and we choose $\Omega$ to be simply connected. These ensure that $F(z)$ does not depend on the choice of path $\gamma_{z_0,z}$ and that the differential of $F$, $dF$, is non-zero (and this ensures that $F$ is an immersion).

The unit normal $\textbf{n}$ and the Gauss curvature $\textbf{K}$
of this surface are (see \cite{Os} sections 8, 9):
\begin{equation} \label{n} \textbf{n}=(2\text{Re} g, 2 \text{Im} g,
|g|^2-1)/(|g|^2+1) \end{equation} \begin{equation} \label{K}
\textbf{K}=-\left[\frac{4|\partial_z g|
|g|}{|\phi|(1+|g|^2)^2}\right]^2 \end{equation}

We will need the following lemma, which follows immediately from the Weierstrass representation \eqr{weierstrassrep}, that gives the differential of $F$:
\begin{Lem} \label{l:dF}
If $F$ is given by \eqr{weierstrassrep} with $g (z) = \e^{i \,
(u(z)+i v(z))}$ and $\phi= dz$, then
\begin{align}
 \partial_x F
    &=  ( \sinh v  \, \cos u ,  \sinh v  \, \sin u  , 1)  \label{e:dxF} \,
    , \\
    \partial_y F
    &= (\cosh v \,  \sin u , - \cosh v \,  \cos u , 0)  \, . \label{e:dyF}
\end{align}
\end{Lem}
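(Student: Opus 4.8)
The plan is to verify the lemma by a direct computation straight from \eqr{weierstrassrep}. First I would note that since $\phi = dz$, the integrand is the holomorphic $\CC^3$-valued function
\[
\Phi(z) = \left(\tfrac12(g^{-1}-g),\ \tfrac{i}{2}(g^{-1}+g),\ 1\right),
\]
so $F(z) = \RE \int_0^z \Phi(\zeta)\,d\zeta$ is the real part of a holomorphic map $G$ with $G' = \Phi$. Differentiating under the integral sign and using $\partial_x G = G' = \Phi$, $\partial_y G = i\,G' = i\,\Phi$ gives $\partial_x F = \RE\,\Phi$ and $\partial_y F = \RE(i\,\Phi) = -\imag\,\Phi$. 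This passage — essentially the Cauchy–Riemann relation applied componentwise — is the only spot where any care is needed, namely keeping the sign in the $\partial_y$ formula straight.

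It then remains to expand $\Phi$ explicitly in terms of $u$ and $v$. Writing $g = \e^{i(u+iv)} = \e^{-v}(\cos u + i\sin u)$ and $g^{-1} = \e^{v}(\cos u - i\sin u)$, I would compute
\[
g^{-1}-g = 2\sinh v\,\cos u - 2i\,\cosh v\,\sin u, \qquad g^{-1}+g = 2\cosh v\,\cos u - 2i\,\sinh v\,\sin u,
\]
so that
\[
\tfrac12(g^{-1}-g) = \sinh v\,\cos u - i\,\cosh v\,\sin u, \qquad \tfrac{i}{2}(g^{-1}+g) = \sinh v\,\sin u + i\,\cosh v\,\cos u .
\]
Taking real parts of these two entries together with $\RE(1)=1$ yields $\partial_x F = (\sinh v\,\cos u,\ \sinh v\,\sin u,\ 1)$, and taking minus the imaginary parts together with $-\imag(1)=0$ yields $\partial_y F = (\cosh v\,\sin u,\ -\cosh v\,\cos u,\ 0)$, which are exactly \eqr{e:dxF} and \eqr{e:dyF}.

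Since every step is an elementary manipulation of the exponential function, I do not expect any genuine obstacle here; as the statement already advertises, the identities follow immediately from the Weierstrass representation. The computation is worth recording only because the explicit frame \eqr{e:dxF}–\eqr{e:dyF} will be used repeatedly in the constructions that follow.
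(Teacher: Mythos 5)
Your computation is correct and is exactly the direct verification the paper has in mind (the paper itself omits the details, stating only that the lemma "follows immediately from the Weierstrass representation"). The expansion of $g$ and $g^{-1}$ in terms of $\sinh v$, $\cosh v$, and the sign bookkeeping in $\partial_y F = -\imag\,\Phi$ are all handled correctly.
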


\section{Proof of the Main Theorems}

We proceed to prove Theorem \ref{t:main1} by first constructing a family of minimal immersions $F_N$ with a specific choice of Weierstrass data $g(z)=e^{i h_N(z)} , \phi(z)=dz$, where $h_N(z)=u_N + i v_N$ and a corresponding domain $\Omega_N$ to obtain $F_N(z)$ from \eqr{weierstrassrep}.

 We first define $\displaystyle \partial_{z}h_N(z)$ because it is this derivative that will be essential in determining the curvature properties required of our sequence of embedded minimal disks. We let:

  \[\displaystyle \partial_{z}h_N(z) = \frac{1}{2}\left[\frac{1}{[z^2+(\frac{1}{N})^2]^2}+\frac{1}{N}\sum_{k=1}^{N} \frac{1}{[(z+\frac{k}{N})^2+(\frac{1}{N})^2]^2} \right],\]

for $N \geq 2$ on the domain $\Omega_N = \Omega_N^+ \cup \Omega_N^- \, ,$ where:
\[\displaystyle \Omega_N^+ =\left\{(x,y) \, \left|  \, |y| \leq
\frac{(x^2+(\frac{1}{N})^2)^{5/4}}{4},\, 0 < x \leq 1/2 \right. \right\} \,\]
and $$ \displaystyle \, \Omega_N^- = \left\{(x,y) \, \left| \, |y| \leq
b_N, -1/2 \leq x \leq 0
\right. \right\},$$ where $b_N=\dfrac{1}{4N^{5/2}}$. See Figure \ref{domain}.

In this paper, we will denote the upper and lower boundary of
$\Omega_N$ as $y_{x,N}$ and $-y_{x,N}$ respectively. That is, on $\Omega_N^+$, we set
$y_{x,N}=\frac{(x^2+(\frac{1}{N})^2)^{5/4}}{4}$ and on
$\Omega_N^-$, we set $y_{x,N}=b_N$.

We note that for all $N$,  $\partial_{z}h_N(z)$ is holomorphic on the domain $\Omega_N$ because the poles $\{\pm \frac{i}{N}-\frac{k}{N}\}$ for $0 \leq k \leq N$ lie outside the domain. Furthermore, these poles converge to the line segment $\{-1 \leq x \leq 0 \}$ as $N \to \infty$.

\begin{figure}[tbp]
    \setlength{\captionindent}{4pt}
    \begin{minipage}[t]{0.5\textwidth}
    \centering\includegraphics[width=\textwidth]{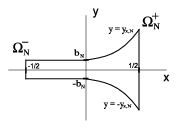}
    \caption{Diagram of the domain, $\Omega_N$}\label{domain}
    \end{minipage}\begin{minipage}[t]{0.5\textwidth}
    \centering\includegraphics[width=\textwidth]{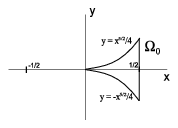}
    \caption{Diagram of the limit domain, $\Omega_0$}\label{omega0}
    \end{minipage}
\end{figure}

Lemma \ref{l:uniformconv} shows that there is a subsequence, $\{\partial_z h_{N_i}(z) \}$, that converges uniformly to a limit that we denote as: $$\displaystyle
\partial_z h(z) = \lim_{N_i \rightarrow \infty} \partial_z h_N(z)$$  on compact subsets of $$\displaystyle
\Omega_0 = \bigcap_N \Omega_N \setminus \{0\} = \left\{(x,y) \, \left| \, |y| \leq \frac{x^{5/2}}{4}, 0 < x
\leq 1/2
\right. \right\}$$
(See Figure \ref{omega0}). Therefore since for each $N$, $\partial_z h_N(z)$ is holomorphic on $\Omega_N$, we have that $\partial_z h(z),$ and hence also $h(z)$, is holomorphic on $\Omega_0$.

\begin{Lem} \label{l:uniformconv}
For $N \to \infty$, there is a subsequence $\{N_i\}$ such that
$\{\partial_z h_{N_i}(z) \}$ converges uniformly on compact subsets of
$\Omega_0$.
\end{Lem}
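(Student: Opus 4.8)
The plan is to show that the sequence $\{\partial_z h_N(z)\}$ is uniformly bounded on compact subsets of $\Omega_0$ and then invoke Montel's theorem to extract a subsequence converging uniformly on compact subsets. Fix a compact set $K \subset \Omega_0$. Since $\Omega_0 = \{(x,y) : |y| \le x^{5/2}/4,\ 0 < x \le 1/2\}$, there is $\delta > 0$ so that $x \ge \delta$ for all $(x,y) \in K$, and also $|z| \le 1$ on $K$ (in fact $|z|\le \sqrt{1/4+1/64}$). The strategy is to bound the two pieces of
\[
\partial_z h_N(z) = \frac{1}{2}\left[\frac{1}{[z^2+(\frac1N)^2]^2} + \frac1N \sum_{k=1}^N \frac{1}{[(z+\frac kN)^2+(\frac1N)^2]^2}\right]
\]
separately, uniformly in $N$. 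For the first term, on $K$ we have $\mathrm{Re}(z^2) = x^2 - y^2 \ge x^2 - x^5/16 \ge x^2/2 \ge \delta^2/2$ for $\delta$ small (or directly, since $|y| \le x^{5/2}/4 \le x/8$ on $0<x\le 1/2$, we get $x^2 - y^2 \ge x^2(1 - 1/64) > 0$), so $|z^2 + (\frac1N)^2| \ge \mathrm{Re}(z^2 + \frac1{N^2}) \ge \delta^2/2$, and hence $|[z^2+(\frac1N)^2]^{-2}| \le 4/\delta^4$.

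For the sum, the key point is that for $k \ge 1$ and $z = x + iy$ with $x > 0$, the point $z + \frac kN$ has real part $x + \frac kN \ge \frac kN > 0$, so $(z+\frac kN)^2$ again has positive real part bounded below: $\mathrm{Re}((z+\frac kN)^2) = (x+\frac kN)^2 - y^2 \ge (x + \frac kN)^2 - x^2/64 \ge c(x+\frac kN)^2$ for a universal $c > 0$ (using $|y| \le x/8$). Therefore $|(z+\frac kN)^2 + (\frac1N)^2| \ge c(x + \frac kN)^2 \ge c(\delta + \frac kN)^2$, giving
\[
\frac1N \sum_{k=1}^N \frac{1}{|[(z+\frac kN)^2+(\frac1N)^2]|^2} \le \frac{1}{c^2}\cdot\frac1N \sum_{k=1}^N \frac{1}{(\delta + \frac kN)^4} \le \frac{1}{c^2}\int_0^1 \frac{dt}{(\delta + t)^4} \le \frac{C}{\delta^3},
\]
where the Riemann-sum comparison is justified because $t \mapsto (\delta+t)^{-4}$ is decreasing. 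Combining, $|\partial_z h_N(z)| \le C(\delta)$ for all $N$ and all $z \in K$, with $C(\delta)$ independent of $N$.

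With the uniform bound in hand, Montel's theorem (the $\{\partial_z h_N\}$ being holomorphic on $\Omega_0$, which follows since each $\partial_z h_N$ is holomorphic on $\Omega_N \supset \Omega_0$ — the poles $\pm\frac iN - \frac kN$ are not in $\Omega_0$) yields a subsequence $\{\partial_z h_{N_i}\}$ converging uniformly on compact subsets of $\Omega_0$, proving the lemma. The only mildly delicate point is making the real-part lower bounds uniform near the "cusp" of $\Omega_0$ at the origin, but since every compact $K$ stays away from $x = 0$ this is not an obstacle; the Riemann-sum estimate for the $k$-sum is the one computation to carry out carefully, and it is routine because the summand is monotone.
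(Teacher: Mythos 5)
Your proof is correct and follows essentially the same route as the paper: establish a uniform bound for $\{\partial_z h_N\}$ on each compact subset of $\Omega_0$ and apply Montel's theorem, the only difference being that you spell out the lower bounds on the denominators (via $|y|\le x/8$ and real-part estimates) that the paper merely asserts via the existence of $r_K$. Note that the Riemann-sum comparison is not actually needed, since the prefactor $\tfrac1N$ in front of the $N$-term sum already lets you bound the average by the worst single term, which is uniformly controlled by your $c(\delta+\tfrac kN)^2\ge c\delta^2$ estimate.
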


\begin{proof}
For every compact subset, $K \subset \Omega_0 , \, \exists \,
r_K>0$ such that $\forall z \in K , \, \forall N,k>0 , \, |(z+\frac{k}{N})^2+(\frac{1}{N})^2|>r_K , \, |z^2+(\frac{1}{N})^2|>r_K \\$
$\displaystyle \implies  \left|\partial_{z}h_N(z)\right| =
\left|
\frac{1}{2}\left[\frac{1}{[z^2+(\frac{1}{N})^2]^2}+\frac{1}{N}\sum_{k=1}^{N}
\frac{1}{[(z+\frac{k}{N})^2+(\frac{1}{N})^2]^2} \right] \right| \\ <
\frac{1}{2}\left[\frac{1}{r_K^2}+\frac{1}{r_K^2}\right] < \frac{1}{r_K^2}$

$\implies \{ \partial_z h_N \}$ is a family of holomorphic
functions , bounded on compact subsets of $\Omega_0$

$\implies$ by Montel's theorem, there is a subsequence that
converges uniformly on compact subsets of $\Omega_0$ to a
holomorphic limit.
\end{proof}

Now, since for each $N$, $\partial_z h_N(z)$ is holomorphic on $\Omega_N$, we integrate to obtain our Weierstrass data: \begin{eqnarray} \lefteqn{ \label{hNz} \displaystyle h_N(z) =
\frac{N^2}{4}\left[N\arctan(Nz)+\frac{z}{z^2+(\frac{1}{N})^2} \right.} \\ & &
+ \left.\frac{1}{N}\sum_{k=1}^{N}
\left(N\arctan(N(z+k/N))+\frac{(z+k/N)}{(z+k/N)^2+(\frac{1}{N})^2}\right)\right] \nonumber
\end{eqnarray}

And since $h_N(z)$ is also holomorphic on $\Omega_N,$ by the
Cauchy-Riemann equations we have:
\[\partial_{z}h_N(z) = \partial_{x}u_N- i\partial_{y}u_N = \partial_{y}v_N+ i\partial_{x}v_N \]

Therefore:

\begin{eqnarray}
  \lefteqn{\displaystyle
\partial_{z}h_N(z) = \frac{1}{2}\left[\frac{1}{[z^2+(\frac{1}{N})^2]^2}+ \frac{1}{N}\sum_{k=1}^{N}
\frac{1}{[(z+\frac{k}{N})^2+(\frac{1}{N})^2]^2}\right]} \nonumber \\
& & = \frac{1}{2}\left[\frac{(x^2 +(\frac{1}{N})^2 - y^2)^2 - 4x^2y^2 -
4ixy(x^2 +(\frac{1}{N})^2 - y^2)}{([x^2 +(\frac{1}{N})^2 - y^2]^2+4x^2y^2)^2} \right. \nonumber \\
& & + \frac{1}{N}\sum_{k=1}^{N} \left( \frac{((x+k/N)^2 +(\frac{1}{N})^2 -
y^2)^2 - 4(x+k/N)^2y^2}{([(x+k/N)^2 +(\frac{1}{N})^2 - y^2]^2+4(x+k/N)^2y^2)^2} \right. \nonumber \\
& & \left. \left. - \frac{4i(x+k/N)y((x+k/N)^2 +(\frac{1}{N})^2 -
y^2)}{([(x+k/N)^2 +(\frac{1}{N})^2 - y^2]^2+4(x+k/N)^2y^2)^2}\right) \right] \nonumber
\end{eqnarray}

\begin{eqnarray}
\lefteqn{ \implies \partial_{y}u_N = \frac{1}{2}\left[\frac{4xy(x^2
+(\frac{1}{N})^2 - y^2)}{([x^2 +(\frac{1}{N})^2 - y^2]^2+4x^2y^2)^2}\right.} \label{dyuN} \\
&& + \left. \frac{1}{N}\sum_{k=1}^{N} \frac{ 4(x+k/N)y((x+k/N)^2 +(\frac{1}{N})^2 -
y^2)}{([(x+k/N)^2 +(\frac{1}{N})^2 - y^2]^2+4(x+k/N)^2y^2)^2}\right] \nonumber
\end{eqnarray}

And \begin{eqnarray} \lefteqn{\displaystyle
\partial_{y}v_N = \frac{1}{2}\left[\frac{(x^2 +(\frac{1}{N})^2 - y^2)^2 - 4x^2y^2}{([x^2 +(\frac{1}{N})^2 - y^2]^2+4x^2y^2)^2} \right.} \label{dyvN} \\
&& + \left. \frac{1}{N}\sum_{k=1}^{N} \frac{((x+k/N)^2 +(\frac{1}{N})^2 - y^2)^2 -
4(x+k/N)^2y^2}{([(x+k/N)^2 +(\frac{1}{N})^2 -
y^2]^2+4(x+k/N)^2y^2)^2}\right] \nonumber
\end{eqnarray}

Now the main difficulty we encounter in the proof of Theorem \ref{t:main1} is showing that the immersions
$F_N : \Omega_N \to \RR^3$ are in fact embeddings.

The next Lemma gives this embeddedness result.

\begin{Lem}
\label{lem3}
There exists $r_0>0$ (independent of $N$) such that $\forall (x,y)
\in \Omega_N,$ \label{embeddedness}
\begin{eqnarray} && x_3 ( F_N (x,y) ) = x  \, . \label{e:zequalx} \\
&& \text{The curve } F_N(x,\cdot) : [-y_{x,N} , y_{x,N}] \to \{ x_3 = x \} \label{e:graphical} \\
&& \text{ is a graph in the $\{x_3=x\}$ plane} \, . \nonumber\\
&&  |F_N(x , \pm y_{x,N}) - F_N(x,0)| > r_0\text{ for all } N \, . \label{e:boundary}\\
&& \text{In fact, for} \,\, x \leq 0, \, |F_N(x , \pm y_{x,N}) - F_N(x,0)| \to \infty \, \text{as} \, N \to \infty \label{e:boundaryinfinity}
\end{eqnarray}
\end{Lem}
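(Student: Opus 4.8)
The statement has four parts, and the cleanest route is to establish them in the order \eqref{e:zequalx}, \eqref{e:graphical}, then \eqref{e:boundary} together with \eqref{e:boundaryinfinity}, since each builds on the previous. The key computational input is Lemma \ref{l:dF}: with $g = e^{ih_N}$, $h_N = u_N + iv_N$, and $\phi = dz$, the immersion $F_N$ has $\partial_x F_N = (\sinh v_N \cos u_N, \sinh v_N \sin u_N, 1)$ and $\partial_y F_N = (\cosh v_N \sin u_N, -\cosh v_N \cos u_N, 0)$.

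\smallskip
\emph{Step 1: the height function.} Equation \eqref{e:zequalx} is immediate: the third component of $\partial_x F_N$ is $1$ and of $\partial_y F_N$ is $0$, so $x_3(F_N(x,y))$ depends only on $x$ with derivative $1$; since $x_3(F_N(0,0)) = 0$ (base point $z_0 = 0$), we get $x_3(F_N(x,y)) = x$. This already shows each horizontal slice $\{x_3 = x\} \cap F_N(\Omega_N)$ is exactly the image of the vertical segment $\{x\} \times [-y_{x,N}, y_{x,N}]$.

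\smallskip
\emph{Step 2: each slice is a graph.} For \eqref{e:graphical} I would fix $x$ and look at the planar curve $\sigma(y) = F_N(x,y)$ in $\{x_3 = x\} \cong \RR^2$. Its velocity is $\partial_y F_N = \cosh v_N \,(\sin u_N, -\cos u_N)$, a nonzero vector (since $\cosh v_N \geq 1$) whose direction is controlled by $u_N(x,y)$. The curve is a graph over a line as soon as its tangent direction stays within an open half-plane of directions, i.e. as long as $u_N(x,\cdot)$ varies by less than $\pi$ along $[-y_{x,N}, y_{x,N}]$; more precisely one shows $\partial_y u_N$ has a sign, or that the oscillation of $u_N$ in $y$ is $< \pi$, on the thin strip $\Omega_N$. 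This is where the precise width $y_{x,N} = (x^2 + (1/N)^2)^{5/4}/4$ on $\Omega_N^+$ and $y_{x,N} = b_N = 1/(4N^{5/2})$ on $\Omega_N^-$ is designed to do its job: from the explicit formula \eqref{dyuN} for $\partial_y u_N$ one estimates $\int_0^{y_{x,N}} |\partial_y u_N(x,t)|\,dt$ and checks it is small — the $5/4$ power (resp. the $N^{-5/2}$ scale) is exactly what keeps $|\partial_y u_N|$ times the width under control, because near $x = 0$ the factor $1/[x^2+(1/N)^2]^2$ in $\partial_z h_N$ is as large as $N^4$ while the width shrinks like $N^{-5/2}$ (and like $x^{5/2}$ for $x > 0$).

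\smallskip
\emph{Step 3: the boundary estimates.} For \eqref{e:boundary} and \eqref{e:boundaryinfinity}, I would compute the chord length $|F_N(x, y_{x,N}) - F_N(x,0)| = \left|\int_0^{y_{x,N}} \partial_y F_N(x,t)\,dt\right|$. Since Step 2 gives that the tangent directions $(\sin u_N, -\cos u_N)$ lie in a narrow cone (oscillation $< \pi$, in fact one can arrange $\ll \pi$), there is no cancellation in the integral, so the chord length is comparable to the arclength $\int_0^{y_{x,N}} \cosh v_N(x,t)\,dt \geq \int_0^{y_{x,N}} \cosh v_N(x,t)\,dt$. The point is that $v_N$ becomes large on the upper/lower boundary of the strip: from \eqref{dyvN}, $\partial_y v_N$ is essentially the real part of $\partial_z h_N$, which is positive and large (of order up to $N^4$ near $x=0$) on the relevant region, so $v_N(x, y_{x,N})$ grows, forcing $\cosh v_N$ and hence the chord to be bounded below by a constant $r_0$ independent of $N$ — and for $x \leq 0$, where $\Omega_N^-$ sits right next to the cluster of poles $\{\pm i/N - k/N\}$, one squeezes a genuinely divergent lower bound on $v_N(x, b_N)$, giving \eqref{e:boundaryinfinity}. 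This uniform lower bound $r_0$ must be extracted carefully at the "corner" $x \to 0^+$ where $\Omega_N^+$ meets $\Omega_N^-$, since the width and the pole strength are both changing scale there.

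\smallskip
\emph{Main obstacle.} The crux is Step 2 — showing the $y$-oscillation of $u_N$ is less than $\pi$ uniformly in $N$ and $x$ — because it requires a delicate two-regime estimate of the integral of \eqref{dyuN}: for $x$ bounded away from $0$ the integrand is harmless, but for $x$ comparable to $1/N$ (and for $x < 0$ near the pole cluster) one is integrating a quantity of size $\sim N^4$ over a strip of width $\sim N^{-5/2}$ or $\sim x^{5/2}$, and the two must balance with a constant strictly less than $\pi$. Getting honest bounds on the sum $\frac1N\sum_{k=1}^N \frac{4(x+k/N)y(\cdots)}{(\cdots)^2}$ — treating it as a Riemann sum approximating an integral whose value is controlled — is the technical heart, and everything else (Steps 1 and 3) is comparatively routine once Step 2's cone-of-directions conclusion is in hand.
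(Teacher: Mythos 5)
Your plan follows the paper's proof step for step: \eqref{e:zequalx} from the third components of $\partial_xF_N,\partial_yF_N$; \eqref{e:graphical} by bounding the $y$-oscillation of $u_N$ so the tangent directions of $y\mapsto F_N(x,y)$ stay in a cone of opening $<\pi$ (the paper keeps the oscillation below $1$ and uses $\cos 1>1/2$); and \eqref{e:boundary}--\eqref{e:boundaryinfinity} by projecting the chord onto $\gamma'_{x,N}(0)$ and lower-bounding $\int\cosh v_N$ via a lower bound on $\partial_yv_N=\RE\partial_zh_N$, with $y_{x,N}\cosh(\inf|v_N|)\to\infty$ handling the corner $x\to0$ and the divergence for $x\le0$. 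So the route is correct and essentially identical to the paper's.

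One quantitative point in your Step 2 does not close as stated, and it is exactly at what you call the technical heart. You argue that the width $\sim N^{-5/2}$ controls an integrand of size $\sim N^4$; but $N^4\cdot N^{-5/2}=N^{3/2}\to\infty$, so ``$|\partial_yu_N|$ times the width'' is \emph{not} under control. The estimate is saved by the explicit factor of $y$ in the numerator of \eqref{dyuN}: $\partial_yu_N$ is the imaginary part of $\partial_zh_N$ and vanishes linearly on $\{y=0\}$, so $|\partial_yu_N|\le C|y|\bigl[\tfrac{x}{(x^2+N^{-2})^3}+\cdots\bigr]$ with slope of order $N^5$, and
\begin{equation*}
\int_0^{y_{x,N}}|\partial_yu_N(x,t)|\,dt\;\le\;C\,N^5\,y_{x,N}^2\;\sim\;N^5\cdot N^{-5}\;=\;O(1),
\end{equation*}
i.e.\ the correct balance is (slope)$\times$(width)$^2$, which is precisely why the exponents $5/4$ and $5/2$ were chosen. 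With that correction your Step 2 becomes the paper's computation \eqref{e:maxu}, \eqref{e:maxu2}, and the rest of your outline goes through as written.
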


In this Lemma, \eqr{e:zequalx} shows that the horizontal slice of the image, $F_N(\Omega_N) \cap \{x_3=t\}$, is the image of the vertical line $\{x=t\}$ in the domain $(\Omega_N)$. \eqr{e:graphical} shows that the image $F_N(\{x=t\} \cap \Omega_N)$ is a graph in the $\{x_3=t\}$ plane over a line segment in that plane (see Figure \ref{horizontalslice}). Together, these imply embeddedness. Also, \eqr{e:boundary} shows that there is some $r_0$ such that the boundary of the graph in \eqr{e:graphical} lies outside a circle $B_{r_0}(F_N(t,0))$ for all $N$. And \eqr{e:boundaryinfinity} shows that for all $x \leq 0$ (i.e. in the part of the image $F_N(\Omega_N)$ below the $\{x_3=0\}$ plane), these boundaries of the graph in \eqr{e:graphical} actually go to infinity as $N \to \infty$.

\begin{center}
\begin{figure}[ht]
 \includegraphics[width=\textwidth]{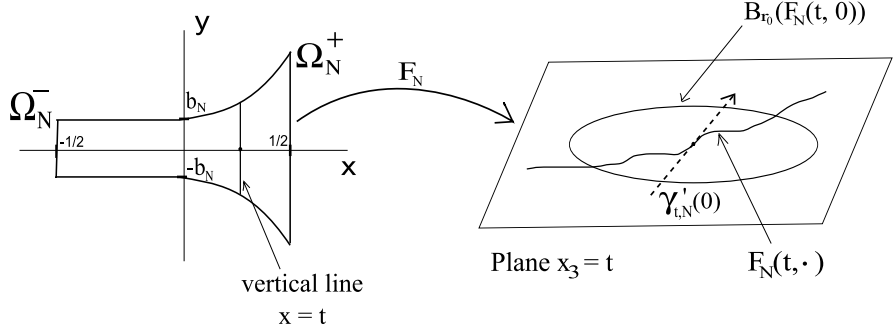}
 \caption{A horizontal slice of $F(\Omega_N)$ in Lemma \ref{embeddedness}}\label{horizontalslice}
\end{figure}
\end{center}

\begin{proof}
Since $z_0=0$ and the height differential is $\phi=dz,\,$
\eqr{e:zequalx} follows immediately from \eqr{weierstrassrep}.

Now we prove \eqr{e:graphical} first for $0<x \leq \frac{1}{2}$
(i.e. on $\Omega_N^+$) and then for $-\frac{1}{2} \leq x \leq 0$
(i.e. on $\Omega_N^-$)

\subsection{\textbf{Proof of \eqr{e:graphical} on $\Omega_N^{+}$}}
We first note that on
\[\Omega_N^{+} =\left\{(x,y) \, \left|  \, |y| \leq \frac{(x^2+(\frac{1}{N})^2)^{5/4}}{4},\, 0 < x \leq 1/2 \right.
\right\}, \]
\begin{equation}
\label{y} \displaystyle
 y^2 \leq \frac{(x^2+(\frac{1}{N})^2)^{5/2}}{16}
\leq \frac{x^2+(\frac{1}{N})^2}{16} \leq \frac{(x+
\frac{k}{N})^2+(\frac{1}{N})^2}{16};
\end{equation} \begin{equation} y^2 \leq \frac{(x^2+(\frac{1}{N})^2)^{5/2}}{16} \leq \frac{((x+ \frac{k}{N})^2+(\frac{1}{N})^2)^{5/2}}{16}\end{equation}
for all $k \geq 1 \,$ since $0<x \leq 1/2$ and $N
\geq 2 \implies (\frac{1}{N}) \leq 1/2$

Now using this in \eqr{dyuN},

\begin{eqnarray}
\lefteqn{\displaystyle |\partial_{y}u_N| \leq \frac{1}{2}\left[\frac{4x|y||x^2 +(\frac{1}{N})^2 - y^2|}{([x^2 +(\frac{1}{N})^2 - y^2]^2+4x^2y^2)^2} \right.} \nonumber \\
&& \left. + \frac{1}{N}\sum_{k=1}^{N} \frac{ 4(x+k/N)|y||(x+k/N)^2 +(\frac{1}{N})^2 -
y^2|}{([(x+k/N)^2 +(\frac{1}{N})^2 - y^2]^2+4(x+k/N)^2y^2)^2}\right] \nonumber \\
&& \leq \frac{1}{2}\left[ \frac{4x|y|(x^2 +(\frac{1}{N})^2)}{([\frac{15}{16}(x^2 +(\frac{1}{N})^2)]^2)^2} \right. \nonumber\\
&& \left. + \frac{1}{N}\sum_{k=1}^{N} \frac{ 4(x+k/N)|y|((x+k/N)^2 +(\frac{1}{N})^2)}{([\frac{15}{16}((x+k/N)^2 +(\frac{1}{N})^2)]^2)^2}\right] \nonumber\\
&& = 2\left(\frac{16}{15}\right)^4|y|\left[\frac{x}{(x^2 +(\frac{1}{N})^2)^3}+\frac{1}{N}\sum_{k=1}^{N}
\frac{(x+\frac{k}{N})}{((x+\frac{k}{N})^2 +(\frac{1}{N})^2)^3}\right] \nonumber \end{eqnarray}

We set $\displaystyle y_{x,N}=\frac{(x^2+(\frac{1}{N})^2)^{5/4}}{4}\\
\implies \max_{|y| \leq y_{x,N}} |u_N(x,y) - u_N(x,0)| \\
\leq 2\left(\frac{16}{15}\right)^4 \left(\int_0^{y_{x,N}} t \, dt
\right)\left[\frac{x}{(x^2 +(\frac{1}{N})^2)^3}+\frac{1}{N}\sum_{k=1}^{N}
\frac{(x+\frac{k}{N})}{((x+\frac{k}{N})^2 +(\frac{1}{N})^2)^3} \right] \\ =
\left(\frac{16}{15}\right)^4 {y_{x,N}}^2 \left[\frac{x}{(x^2
+(\frac{1}{N})^2)^3}+\frac{1}{N}\sum_{k=1}^{N}
\frac{(x+\frac{k}{N})}{((x+\frac{k}{N})^2 +(\frac{1}{N})^2)^3} \right]  \\
= \left(\frac{16}{15}\right)^4 \frac{(x^2+(\frac{1}{N})^2)^{5/2}}{16}
\left[\frac{x}{(x^2 +(\frac{1}{N})^2)^3}+\frac{1}{N}\sum_{k=1}^{N}
\frac{(x+\frac{k}{N})}{((x+\frac{k}{N})^2 +(\frac{1}{N})^2)^3} \right]  \\
= \left(\frac{16}{15}\right)^4 \frac{1}{16}
\left[\frac{x(x^2+(\frac{1}{N})^2)^{5/2}}{(x^2
+(\frac{1}{N})^2)^3}+\frac{1}{N}\sum_{k=1}^{N}
\frac{(x+\frac{k}{N})((x+\frac{k}{N})^2+(\frac{1}{N})^2)^{5/2}}{((x+\frac{k}{N})^2
+(\frac{1}{N})^2)^3} \right] \\
 =  \left(\frac{16}{15}\right)^4 \frac{1}{16}
\left[\frac{x}{(x^2 +(\frac{1}{N})^2)^{\frac{1}{2}}}+\frac{1}{N}\sum_{k=1}^{N}
\frac{(x+\frac{k}{N})}{((x+\frac{k}{N})^2 +(\frac{1}{N})^2)^{\frac{1}{2}}}
\right]$
\begin{equation} \leq  \left(\frac{16}{15}\right)^4 \frac{1}{16}[1+1] < 1 \label{e:maxu} \end{equation}

We set $\gamma_{x,N}(y)=F_N(x,y) \implies \gamma'_{x,N}(y)=\partial_y F_N(x,y)$. We note that $v_N(x,0)=0$ from \eqr{hNz} and that $\cos(1)>1/2$.

Therefore \eqr{e:dyF} gives: \begin{equation} \label{e:angle}
\langle \gamma'_{x,N}(y),\gamma'_{x,N}(0) \rangle = \cosh v_N(x,y)
\cos(u_N(x,y)-u_N(x,0)) > \frac{1}{2} \cosh v_N(x,y)
\end{equation}

Hence, by \eqr{e:angle}, the angle between $\gamma'_{x,N}(y)$ and
$\gamma'_{x,N}(0)$ is always less than $\pi/2$. This gives us
\eqr{e:graphical} for all $0<x \leq \frac{1}{2}$ (i.e. on
$\Omega_N^+$)

Furthermore, this result holds uniformly in $N$ because of the
uniform bound in \eqr{e:maxu}.

\subsection{\textbf{Proof of \eqr{e:graphical} on $\Omega_N^{-}$}}
\vskip1cm
 We recall that $\Omega_N^{-} = \{(x,y) \, | \, |y| \leq
b_N, -1/2 \leq x \leq 0 \}$, where $b_N=\dfrac{1}{4N^{5/2}}$

Now we note that on $\Omega_N^{-}$, for $N \geq 2,$
\begin{equation} \label{boundary4}
y^2 \leq b_N^2=\dfrac{1}{16N^{5}} \leq \frac{(x^2+(\frac{1}{N})^2)^{5/2}}{16}\end{equation}
and \begin{equation} \label{boundary3}
y^2 \leq b_N^2=\dfrac{1}{16N^{5}} \leq \frac{((x+k/N)^2+(\frac{1}{N})^2)^{5/2}}{16}\end{equation}
Also, \begin{equation} \label{boundary2}
y^2 \leq b_N^2=\dfrac{1}{16N^{5}}<\frac{x^2+(\frac{1}{N})^2}{16}\end{equation}
and \begin{equation} \label{boundary1}
y^2 \leq b_N^2=\dfrac{1}{16N^{5}}<\frac{(x+k/N)^2+(\frac{1}{N})^2}{16}\end{equation}

Now using these inequalities in \eqr{dyuN},

$\displaystyle |\partial_{y}u_N| \leq \frac{1}{2}\left[\frac{4x|y||x^2
+(\frac{1}{N})^2 - y^2|}{([x^2 +(\frac{1}{N})^2 - y^2]^2+4x^2y^2)^2} \right. \\
+ \left. \frac{1}{N}\sum_{k=1}^{N} \frac{ 4(x+k/N)|y||(x+k/N)^2 +(\frac{1}{N})^2 -
y^2|}{([(x+k/N)^2 +(\frac{1}{N})^2 - y^2]^2+4(x+k/N)^2y^2)^2}\right] \\ \leq
\frac{1}{2}\left[\frac{4x|y|(x^2
+(\frac{1}{N})^2)}{([\frac{15}{16}(x^2 +(\frac{1}{N})^2)]^2)^2} \\
+ \frac{1}{N}\sum_{k=1}^{N} \frac{ 4(x+k/N)|y|((x+k/N)^2
+(\frac{1}{N})^2)}{([\frac{15}{16}((x+k/N)^2 +(\frac{1}{N})^2)]^2)^2}\right]
\\ = 2\left(\frac{16}{15}\right)^4|y|\left[\frac{x}{(x^2
+(\frac{1}{N})^2)^3}+\frac{1}{N}\sum_{k=1}^{N}
\frac{(x+\frac{k}{N})}{((x+\frac{k}{N})^2 +(\frac{1}{N})^2)^3}
\right] $

We recall that $\displaystyle y_{x,N}=b_N \\
\implies \max_{|y| \leq y_{x,N}} |u_N(x,y) - u_N(x,0)| \\
\leq 2\left(\frac{16}{15}\right)^4 \left(\int_0^{y_{x,N}} t \, dt
\right)\left[\frac{x}{(x^2 +(\frac{1}{N})^2)^3}+\frac{1}{N}\sum_{k=1}^{N}
\frac{(x+\frac{k}{N})}{((x+\frac{k}{N})^2 +(\frac{1}{N})^2)^3} \right] \\ =
\left(\frac{16}{15}\right)^4 {y_{x,N}}^2 \left[\frac{x}{(x^2
+(\frac{1}{N})^2)^3}+\frac{1}{N}\sum_{k=1}^{N}
\frac{(x+\frac{k}{N})}{((x+\frac{k}{N})^2 +(\frac{1}{N})^2)^3} \right]  \\
= \left(\frac{16}{15}\right)^4 b_N^2 \left[\frac{x}{(x^2
+(\frac{1}{N})^2)^3}+\frac{1}{N}\sum_{k=1}^{N}
\frac{(x+\frac{k}{N})}{((x+\frac{k}{N})^2 +(\frac{1}{N})^2)^3} \right]  \\
\leq \left(\frac{16}{15}\right)^4 \frac{1}{16}
\left[\frac{x(x^2+(\frac{1}{N})^2)^{5/2}}{(x^2
+(\frac{1}{N})^2)^3}+\frac{1}{N}\sum_{k=1}^{N}
\frac{(x+\frac{k}{N})((x+\frac{k}{N})^2+(\frac{1}{N})^2)^{5/2}}{((x+\frac{k}{N})^2
+(\frac{1}{N})^2)^3} \right] \\  \displaystyle = \left(\frac{16}{15}\right)^4 \frac{1}{16}
\left[\frac{x}{(x^2 +(\frac{1}{N})^2)^{\frac{1}{2}}}+\frac{1}{N}\sum_{k=1}^{N}
\frac{(x+\frac{k}{N})}{((x+\frac{k}{N})^2 +(\frac{1}{N})^2)^{\frac{1}{2}}}
\right]$
\begin{equation} \label{e:maxu2} \leq \left(\frac{16}{15}\right)^4 \frac{1}{16}[1+1] < 1 \end{equation}

And now we use the same argument we used to show \eqr{e:graphical}
on $\Omega_N^+$.
We set $\gamma_{x,N}(y)=F_N(x,y)$. We note that
$v_N(x,0)=0$ and $\cos(1)>1/2$.

Therefore \eqr{e:dyF} gives:
\begin{eqnarray} && \langle \gamma'_{x,N}(y),\gamma'_{x,N}(0) \rangle = \cosh v_N(x,y) \cos(u_N(x,y)-u_N(x,0)) \label{e:angle2}\\
 && > \frac{1}{2} \cosh v_N(x,y) \nonumber
\end{eqnarray}
Hence, by \eqr{e:angle2}, the angle between $\gamma'_{x,N}(y)$ and
$\gamma'_{x,N}(0)$ is always less than $\pi/2$. This gives us
\eqr{e:graphical} for all $-1/2 \leq x \leq 0$ (i.e. on
$\Omega_N^-$)

Furthermore, this result holds uniformly in $N$ because of the
uniform bound in \eqr{e:maxu2}.
Now we prove \eqr{e:boundary} first for $0<x \leq \frac{1}{2}$ (i.e. on $\Omega_N^+$) and then for $-\frac{1}{2} \leq x \leq 0$ (i.e. on $\Omega_N^-$).
\subsection{\textbf{Proof of \eqr{e:boundary} on $\Omega_N^{+}$}}
We recall that $\\ \Omega_N^{+}=\left\{(x,y) \, \left|  \,
|y| \leq \frac{(x^2+(\frac{1}{N})^2)^{5/4}}{4},\, 0 < x \leq 1/2 \right.
\right\},$

From \eqr{dyvN}, and by using \eqr{y} we have:
\begin{eqnarray*}
  \lefteqn{\displaystyle \partial_{y}v_N = \frac{1}{2}\left[\frac{(x^2 +(\frac{1}{N})^2 - y^2)^2 - 4x^2y^2}{([x^2 +(\frac{1}{N})^2 - y^2]^2+4x^2y^2)^2} \right.}\\
  & & + \left. \frac{1}{N}\sum_{k=1}^{N} \frac{((x+\frac{k}{N})^2 +(\frac{1}{N})^2 - y^2)^2 -
4(x+\frac{k}{N})^2y^2}{([(x+\frac{k}{N})^2 +(\frac{1}{N})^2 - y^2]^2+4(x+\frac{k}{N})^2y^2)^2}\right] \\
  & & \geq \frac{1}{2}\left[\frac{(\frac{15}{16}(x^2 +(\frac{1}{N})^2))^2 - 4(x^2 +(\frac{1}{N})^2)\frac{1}{16}(x^2 +(\frac{1}{N})^2)}{([x^2 +(\frac{1}{N})^2]^2+4(x^2 +(\frac{1}{N})^2)\frac{1}{16}(x^2 +(\frac{1}{N})^2))^2} +\frac{1}{N}\sum_{k=1}^{N} \right. \\
  & & \left( \frac{(\frac{15}{16}((x+\frac{k}{N})^2+(\frac{1}{N})^2))^2}{([(x+\frac{k}{N})^2 +(\frac{1}{N})^2]^2+4((x+\frac{k}{N})^2 +(\frac{1}{N})^2)\frac{1}{16}((x+\frac{k}{N})^2+(\frac{1}{N})^2))^2} \right. \\
   & & \left. \left. - \frac{4((x+\frac{k}{N})^2 +(\frac{1}{N})^2)\frac{1}{16}((x+\frac{k}{N})^2+(\frac{1}{N})^2)}{([(x+\frac{k}{N})^2 +(\frac{1}{N})^2]^2+4((x+\frac{k}{N})^2 +(\frac{1}{N})^2)\frac{1}{16}((x+\frac{k}{N})^2+(\frac{1}{N})^2))^2}\right) \right]
\end{eqnarray*}
 $\displaystyle = \frac{1}{2}\left[\frac{\frac{161}{256}(x^2 +(\frac{1}{N})^2)^2}{\frac{25}{16}[x^2 +(\frac{1}{N})^2]^4} \\
+ \frac{1}{N}\sum_{k=1}^{N} \frac{\frac{161}{256}((x+k/N)^2 +(\frac{1}{N})^2)^2}{\frac{25}{16}[(x+\frac{k}{N})^2 +(\frac{1}{N})^2]^4}\right] \\
= \frac{161}{800}\left[\frac{1}{[x^2 +(\frac{1}{N})^2]^2} \\
+ \frac{1}{N}\sum_{k=1}^{N} \frac{1}{[(x+\frac{k}{N})^2 +(\frac{1}{N})^2]^2}\right] $
\begin{equation} \label{e:dyvOmega+} \displaystyle  \implies \partial_{y}v_N \geq \frac{161}{800[x^2+(\frac{1}{N})^2]^2} \end{equation}

We recall that $y_{x,N}=\dfrac{(x^2+(\frac{1}{N})^2)^{5/4}}{4}$

$\displaystyle \implies \min_{y_{x,N}/2\leq|y|\leq
y_{x,N}}|v_N(x,y)|\geq \int_0^{y_{x,N}/2}
\frac{161}{800[x^2+(\frac{1}{N})^2]^2} \, dt \\
= \frac{161}{6400[x^2+(\frac{1}{N})^2]^{3/4}}$

From \eqr{e:angle}, we have $\langle
\gamma'_{x,N}(y),\gamma'_{x,N}(0) \rangle > \frac{1}{2} \cosh
v_N(x,y)$.

Integrating this gives $\langle
\gamma_{x,N}(y_{x,N})-\gamma_{x,N}(0),\gamma'_{x,N}(0) \rangle \\
> \int_{y_{x,N}/2}^{y_{x,N}} \frac{1}{2}\cosh( v_N(x,y)) \, dy \\
\geq \displaystyle \frac{1}{2} \frac{(x^2+(\frac{1}{N})^2)^{5/4}}{8}
\cosh\left(\frac{161}{6400[x^2+(\frac{1}{N})^2]^{3/4}}\right)$

Now, since $|\gamma'_{x,N}(0)|=\cosh v_N(x,0)=1$ and $\\ \displaystyle \lim_{s\rightarrow 0}s^{5/4}
\cosh\left(\frac{161}{6400s^{3/4}} \right)=\infty$, this result
and the analog for $\\ \gamma_{x,N}(-y_{x,N})$ give our result on
$\Omega_N^{+}$ , \eqr{e:boundary}, that
\begin{equation} \label{e:br1} \forall x \in (0,1/2], \, \left| F_N(x, \pm
\frac{(x^2+(\frac{1}{N})^2)^{5/4}}{4})-F_N(x,0)
\right| > r_1 \end{equation} for some $r_1 > 0$ (independent of $N$) and all $N \geq 2$.

\subsection{\textbf{Proof of \eqr{e:boundary} on $\Omega_N^{-}$}}
\vskip1cm We recall that $\Omega_N^{-} = \{(x,y) \, | \, |y| \leq
b_N, -1/2 \leq x \leq 0 \}$, where $b_N=\dfrac{1}{4N^{5/2}}$

Now on $\Omega_N^{-}$, from \eqr{dyvN}, and by using
\eqr{boundary2} and \eqr{boundary1}, we have:
\begin{eqnarray*}
  \lefteqn{\displaystyle \partial_{y}v_N = \frac{1}{2}\left[\frac{(x^2 +(\frac{1}{N})^2 - y^2)^2 - 4x^2y^2}{([x^2 +(\frac{1}{N})^2 - y^2]^2+4x^2y^2)^2} \right.}  \\
  && \left. + \frac{1}{N}\sum_{k=1}^{N} \frac{((x+\frac{k}{N})^2 +(\frac{1}{N})^2 - y^2)^2 -
4(x+\frac{k}{N})^2y^2}{([(x+\frac{k}{N})^2 +(\frac{1}{N})^2 - y^2]^2+4(x+\frac{k}{N})^2y^2)^2}\right] \\
  && \geq \frac{1}{2}\left[\frac{(\frac{15}{16}(x^2 +(\frac{1}{N})^2))^2 - 4(x^2 +(\frac{1}{N})^2)\frac{1}{16}(x^2 +(\frac{1}{N})^2)}{([x^2 +(\frac{1}{N})^2]^2+4(x^2 +(\frac{1}{N})^2)\frac{1}{16}(x^2 +(\frac{1}{N})^2))^2} +\frac{1}{N}\sum_{k=1}^{N} \right.\\
  && \left( \frac{(\frac{15}{16}((x+\frac{k}{N})^2+(\frac{1}{N})^2))^2}{([(x+\frac{k}{N})^2 +(\frac{1}{N})^2]^2+4((x+\frac{k}{N})^2 +(\frac{1}{N})^2)\frac{1}{16}((x+\frac{k}{N})^2+(\frac{1}{N})^2))^2} \right.\\
  && - \left. \left. \frac{4((x+\frac{k}{N})^2 +(\frac{1}{N})^2)\frac{1}{16}((x+\frac{k}{N})^2+(\frac{1}{N})^2)}{([(x+\frac{k}{N})^2 +(\frac{1}{N})^2]^2+4((x+\frac{k}{N})^2 +(\frac{1}{N})^2)\frac{1}{16}((x+\frac{k}{N})^2+(\frac{1}{N})^2))^2}\right) \right]
\end{eqnarray*}
$\displaystyle = \frac{1}{2}\left[\frac{\frac{161}{256}(x^2 +(\frac{1}{N})^2)^2}{\frac{25}{16}[x^2 +(\frac{1}{N})^2]^4} \\
+ \frac{1}{N}\sum_{k=1}^{N} \frac{\frac{161}{256}((x+\frac{k}{N})^2 +(\frac{1}{N})^2)^2}{\frac{25}{16}[(x+\frac{k}{N})^2 +(\frac{1}{N})^2]^4}\right] \\
= \frac{161}{800}\left[\frac{1}{[x^2 +(\frac{1}{N})^2]^2} \\
+ \frac{1}{N}\sum_{k=1}^{N} \frac{1}{[(x+\frac{k}{N})^2 +(\frac{1}{N})^2]^2}\right]
\\ \geq \frac{161}{800}\left[\frac{1}{N}\sum_{k=1}^{N} \frac{1}{[(x+\frac{k}{N})^2
+(\frac{1}{N})^2]^2}\right]$

Now $ \forall x \in [-\frac{1}{2},0], \, \forall N \geq 2, \,
\exists t_x \in \mathbb{Z}, \, 1 \leq t_x \leq N \, $ s.t. $\,
-\frac{t_x}{N} < x \leq
-\frac{t_x-1}{N}$ \\
$\implies x+\frac{t_x}{N} \leq \frac{1}{N}$ \\
Hence $\forall \, -\frac{1}{2} \leq x \leq 0, \,$ \\
$\displaystyle \partial_{y}v_N \geq \frac{161}{800}\frac{1}{N}\frac{1}{[(x+\frac{t_x}{N})^2+(\frac{1}{N})^2]^2} \geq
\frac{161}{800}\frac{1}{N}\frac{1}{[(\frac{1}{N})^2+(\frac{1}{N})^2]^2}$

\begin{equation} \implies \label{dyvNlowerbd} \partial_{y}v_N \geq \frac{161}{3200}N^{3} \end{equation}

$\displaystyle \implies \min_{y_{x,N}/2\leq|y|\leq
y_{x,N}}|v_N(x,y)|\geq \int_0^{y_{x,N}/2} \frac{161}{3200}N^{3} \,
dt \\
= \frac{161}{3200}N^{3} \frac{y_{x,N}}{2} = \frac{161}{6400}N^{3}b_N \\
=  \frac{161}{6400}N^{3}\left(\dfrac{1}{4N^{5/2}}\right) =\frac{161}{25600}N^{1/2} $\\
From \eqr{e:angle}, we have $\langle
\gamma'_{x,N}(y),\gamma'_{x,N}(0) \rangle > \frac{1}{2} \cosh
v_N(x,y)$\\
Integrating this gives \\
$\langle \gamma_{x,N}(y_{x,N})-\gamma_{x,N}(0),\gamma'_{x,N}(0) \rangle >
\int_{y_{x,N}/2}^{y_{x,N}} \frac{1}{2}\cosh( v_N(x,y)) \, dy \\
\geq \displaystyle \frac{1}{2} \frac{b_N}{2} \cosh\left(\frac{161}{25600}N^{1/2}\right)$
\begin{equation} \label{bdryinfinityproof}\implies \langle
\gamma_{x,N}(y_{x,N})-\gamma_{x,N}(0),\gamma'_{x,N}(0) \rangle > \dfrac{1}{16N^{5/2}}
\cosh\left(\frac{161}{25600}N^{1/2}\right)  \end{equation}

Now, since $|\gamma'_{x,N}(0)|=\cosh v_N(x,0)=1$ and $\\ \displaystyle \lim_{N\rightarrow
\infty}\dfrac{1}{16N^{5/2}}
\cosh\left(\frac{161}{25600}N^{1/2}\right) =\infty$, this result
and the analog for $\\ \gamma_{x,N}(-y_{x,N})$ give our result on
$\Omega_N^{-}$ , \eqr{e:boundary}, that
\begin{equation} \label{e:br2} \forall x \in [-1/2,0], \, \left|
F_N(x, \pm b_N)-F_N(x,0)
\right| > r_2 \end{equation} for some $r_2 > 0$ (independent of $N$) and all $N \geq 2$.

Hence, by choosing $r_0=\min\{r_1,r_2\}$ given by \eqr{e:br1} and \eqr{e:br2}, we have \eqr{e:boundary}.\\\\
Also by \eqr{bdryinfinityproof} we have the result \eqr{e:boundaryinfinity} that for $x \leq 0,\,|F_N(x , \pm b_N) - F_N(x,0)| \rightarrow \infty \, \text{as} \, N \rightarrow \infty$.

\end{proof}

Now we will prove the following corollary that gives us the embeddings $F_N$ that we will use in the proof of Theorem \ref{t:main1}.

\begin{Cor} \label{c:corembeddings}
Let $r_0$ be given by \eqr{e:boundary}.
\begin{enumerate}
 \item[(i)]
$F_N$ is an embedding and $F_N \left(\Omega_N \right) \subset \RR^2 \times [-1/2,1/2] \subset \RR^3$.
 \item[(ii)]
 $F_N (t,0) =
(0,0,t)$ for $|t| \leq 1/2$.
 \item[(iii)]
 $F_N \left(\Omega_N \right) \setminus \{(0,0,t) |\, |t| \leq 1/2 \} = {M}_{1,N} \cup {M}_{2,N}$
 for multi-valued graphs ${M}_{1,N} , \, {M}_{2,N}$ over the $\{x_3=0\} \setminus \{0\}$ punctured plane.
\end{enumerate}
\end{Cor}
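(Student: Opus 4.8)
The plan is to read off (i), (ii), (iii) from Lemma~\ref{lem3}, from Lemma~\ref{l:dF}, and from the pointwise sign of $v_N$ that is already produced inside the proof of Lemma~\ref{lem3}. Observe first that $\Omega_N$ is compact: the ``open'' edge $0<x\le 1/2$ of $\Omega_N^+$ closes up because $(x^2+(1/N)^2)^{5/4}/4\to b_N$ as $x\to 0^+$ and the segment $\{0\}\times[-b_N,b_N]$ already lies in $\Omega_N^-$; it is also simply connected, so $F_N$ is a well-defined immersion ($g=e^{ih_N}$ and $\phi=dz$ have no zeros or poles on $\Omega_N$, since $h_N$ is holomorphic there). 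For (ii): on the real segment $\{(t,0):|t|\le 1/2\}\subset\Omega_N$ one has $v_N(t,0)=\imag h_N(t)=0$, since $h_N$ is real on the real axis by \eqr{hNz}; hence Lemma~\ref{l:dF} with $v=0$ gives $\partial_x F_N(t,0)=(0,0,1)$, and integrating from $z_0=0$ along the real axis in \eqr{weierstrassrep} yields $F_N(t,0)=(0,0,t)$. Combined with \eqr{e:zequalx}, which gives $x_3(F_N(x,y))=x\in[-1/2,1/2]$ everywhere on $\Omega_N$, this also yields the containment $F_N(\Omega_N)\subset\RR^2\times[-1/2,1/2]$ in (i).

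For the embeddedness in (i): if $F_N(x,y)=F_N(x',y')$, then $x=x'$ by \eqr{e:zequalx}, and then $y=y'$ because by \eqr{e:graphical} the curve $F_N(x,\cdot)$ is a graph over a segment in $\{x_3=x\}$, hence injective. So $F_N$ is an injective immersion on a compact domain, hence a homeomorphism onto its image and an embedding.

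For (iii): since $y_{x,N}>0$ for every $x\in[-1/2,1/2]$, deleting the real segment disconnects $\Omega_N$ into the two connected pieces $\Omega_N\cap\{y>0\}$ and $\Omega_N\cap\{y<0\}$; set $M_{1,N}=F_N(\Omega_N\cap\{y>0\})$ and $M_{2,N}=F_N(\Omega_N\cap\{y<0\})$, which are connected, disjoint (by injectivity of $F_N$), and whose union is $F_N(\Omega_N)\setminus\{(0,0,t):|t|\le 1/2\}$ by (ii). To identify each $M_{j,N}$ as a multi-valued graph over $\{x_3=0\}\setminus\{0\}$ I would assemble three facts. First, $M_{j,N}$ avoids the $x_3$-axis: the proof of \eqr{e:graphical} gives $\langle\gamma'_{x,N}(y),\gamma'_{x,N}(0)\rangle>\frac{1}{2}\cosh v_N(x,y)>0$, and since $\gamma'_{x,N}(0)=\partial_y F_N(x,0)=(\sin u_N(x,0),-\cos u_N(x,0),0)$ is horizontal by \eqr{e:dyF}, the map $y\mapsto\langle F_N(x,y),\gamma'_{x,N}(0)\rangle$ is strictly increasing and vanishes at $y=0$ (where $F_N(x,0)=(0,0,x)$ lies on the axis), so the horizontal projection of $F_N(x,y)$ is nonzero whenever $y\ne 0$. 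Second, $M_{j,N}$ is everywhere locally a graph over $\{x_3=0\}$, with the vertical component of the unit normal of one sign on each of $M_{1,N},M_{2,N}$: the proof of \eqr{e:boundary} shows $\partial_y v_N>0$ on all of $\Omega_N$ (via \eqr{e:dyvOmega+} on $\Omega_N^+$ and \eqr{dyvNlowerbd} on $\Omega_N^-$), so with $v_N(x,0)=0$ one gets $v_N>0$ where $y>0$ and $v_N<0$ where $y<0$; hence $|g|=e^{-v_N}\ne 1$ off the axis, and by \eqr{n} the unit normal is never horizontal, with third component $(|g|^2-1)/(|g|^2+1)$ negative on $M_{1,N}$ and positive on $M_{2,N}$. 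Third, passing to the universal cover of the punctured plane and combining these with the embeddedness of $M_{j,N}$ presents it as the graph of a single-valued function over a domain in the cover, i.e.\ a multi-valued graph.

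The step I expect to be the real work is the last one: upgrading ``axis-avoiding and locally graphical'' to ``a genuine multi-valued graph over the punctured plane.'' This requires controlling how the horizontal direction $(\sin u_N(x,0),-\cos u_N(x,0),0)$ of the slice $F_N(x,\cdot)$ rotates as the height $x=x_3$ varies, i.e.\ the monotonicity and total variation of $u_N(x,0)=h_N(x)$ on each of $(0,1/2]$ and $[-1/2,0)$, read off from \eqr{hNz}; this is what forces the sheets of $M_{j,N}$ to stack up consistently over $\{x_3=0\}\setminus\{0\}$ (and, in the limit, to spiral into $\{x_3=0\}$), and it is the one place where quantitative information about $h_N$ is genuinely needed rather than soft topology.
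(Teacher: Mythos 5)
Your proposal is correct and follows essentially the same route as the paper: (i) from \eqr{e:zequalx} and \eqr{e:graphical}, (ii) by integrating \eqr{e:dxF} along the real axis using $v_N(x,0)=0$, and (iii) by observing via \eqr{n} that the normal is horizontal exactly where $|g_N|=1$, i.e.\ where $v_N=0$, which by the lower bounds \eqr{e:dyvOmega+} and \eqr{dyvNlowerbd} happens only at $y=0$. Your extra remarks (compactness of $\Omega_N$, the sign of the normal's third component on each half, and the flagged need to track the rotation of $u_N(x,0)$ to pin down the sheet structure) are sound refinements of details the paper states more tersely.
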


\begin{proof}

(i) follows from \eqr{e:zequalx} and \eqr{e:graphical}.

We obtain (ii) by integrating \eqr{e:dxF} with respect to $x$,
using the fact that $v_N(x,0)$ is identically $0$, and the fact
that $F(0,0)=(0,0,0)$ because
of our choice of $z_0=0$ in \eqr{weierstrassrep}.
From (\ref{n}), $F_N$ is "vertical" (i.e. $\langle
\textbf{n},(0,0,1) \rangle = 0$) exactly when $|g_N|=1$.
But since $g_N(z)=e^{i(u_N(z)+iv_N(z))}, \, |g_N(x,y)|=1 \iff
v_N(x,y)=0$.
Now for $x>0$, by \eqr{e:dyvOmega+} and since $v_N(x,0)=0$, we have $\dd |v_N(x,y)| \geq \dfrac{161 |y|}{800[x^2+(\frac{1}{N})^2]^2}$.
Similarly, for $x \leq 0$, by \eqr{dyvNlowerbd} and since $v_N(x,0)=0$, we have $\dd |v_N(x,y)| \geq \dfrac{161 |y|}{3200}N^{3}$.
Hence, for all $x,\, \dd v_N(x,y)=0 \iff y=0$ and therefore $\langle
\textbf{n},(0,0,1) \rangle = 0 \iff y=0$.

Therefore by Corollary \ref{c:corembeddings}, (ii), the image of
$F_N$ is graphical away from the $x_3$-axis, giving us (iii).

\end{proof}

\subsection{Proof of Theorem \ref{t:main1}} $$$$

Corollary \ref{c:corembeddings} gives us a sequence of minimal embeddings
$F_N: \Omega_N \to \RR^2 \times [-1/2,1/2] \subset \RR^3$ with $F_N (t,0) = (0,0,t)$ for $|t| \leq 1/2$.

We let $M_{N} = F_{N}(\Omega_{N})$.
\subsubsection{Proof of Theorem \ref{t:main1} (a)}
\vskip1cm By using \eqr{K} with our Weierstrass data, $g (z) = \e^{i \,
(u(z)+i v(z))}$ and $\phi= dz$, we have that the curvature of $F_N$ is
given by \begin{equation} \label{curveq} \displaystyle K_N(z)=
\frac{-{|\partial_z h_N|}^2}{\cosh^4{v_N}}\end{equation}
Therefore, if $|\partial_z h_N|
\rightarrow \infty$ and for some constant $M>0, \cosh^4{v_N}<M$, then $K_N \rightarrow \infty$

Let $z \in [-1/2,0]$.
$\forall N \geq 2, \, \exists t_z \in \mathbb{Z}, \, 1 \leq t_z
\leq N \, $ s.t. $\, -\dfrac{t_z}{N} < z \leq
-\dfrac{t_z-1}{N}$
$\implies z+\dfrac{t_z}{N} \leq \dfrac{1}{N}$

$\displaystyle \implies
\partial_z h_N(z)=\frac{1}{2}\left[\frac{1}{[z^2+(\frac{1}{N})^2]^2}+\frac{1}{N}\sum_{k=1}^{N}
\frac{1}{[(z+\frac{k}{N})^2+(\frac{1}{N})^2]^2} \right] \\ \geq
\frac{1}{2N} \frac{1}{[(z+\frac{t_z}{N})^2+(\frac{1}{N})^2]^2}$
$\displaystyle \geq \frac{1}{2N} \frac{1}{[(\frac{1}{N})^2+(\frac{1}{N})^2]^2}= N^3/8$
$\displaystyle \implies \lim_{N \rightarrow \infty}{\partial_z
h_N(z)}= \infty$

Now, we also note that $\forall x, \, -1/2 \leq x \leq 1/2, \,
v_N(x,0)=0. \\ \implies \cosh^4(v_N(x,0))=1$

Hence, we have curvature blowing up at all points of the line
segment, $[-1/2,0] \subset \Omega_N $.

This gives us that $\displaystyle \forall p \in \{ (0,0,t) \,
| \, -1/2\leq t \leq 0 \}, \, \lim_{N\to \infty} |A_{M_N}|^2
(p) = \infty$.

\subsubsection{Proof of Theorem \ref{t:main1} (b) and (c)}

Theorem \ref{t:main1} (b) follows immediately from Corollary \ref{c:corembeddings}(iii).

To prove Theorem \ref{t:main1} (c) we fix $\delta>0,$ and let $B_{\delta}$ be a ${\delta}$-neighborhood of
$\\ \{ (0,0,t) \, | \, -1/2\leq t \leq 0 \}$ that is cylindrically shaped (shown in Figure \ref{deltaneighborhood}).
\begin{center}
\begin{figure}[ht]
 \includegraphics[width=0.5\textwidth]{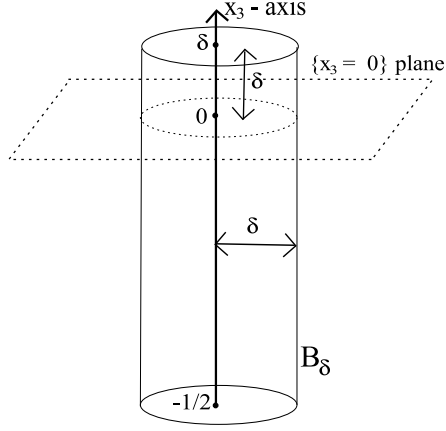}
 \caption{Diagram of $B_{\delta}$, the ${\delta}$-neighborhood of $\{ (0,0,t) \, | \, -1/2\leq t \leq 0 \}$ in Theorem \ref{t:main1}}\label{deltaneighborhood}
\end{figure}
\end{center}
Then $\forall N, \, \forall \,p = (x_1,x_2,x_3) \in M_N,$ where $x_3>\delta$ (i.e. for all points of $M_N$
that are more than a $\delta$ distance above the $x_3=0$ plane), by
(\ref{e:zequalx}), $x=x_3>\delta$, where $(x,y) \in \Omega_N^+$
such that $F_N(x,y)=(x_1,x_2,x_3)$.

Hence, since on $\displaystyle \Omega_N^+$, $ y^2 \leq
\dfrac{x^2+1/N^2}{16} \leq
\dfrac{(x+k/N)^2+1/N^2}{16}$,

$\displaystyle
|z^2+(\frac{1}{N})^2|^2=|(x+iy)^2+(1/N)^2|^2=[x^2+(1/N)^2-y^2]^2+4x^2y^2$

$\displaystyle \geq (15/16)^2[x^2+(1/N)^2]^2+4x^2y^2 \geq
(15/16)^2x^4 > (15/16)^2\delta^4$

Similarly, $\displaystyle |(z+k/N)^2+(\frac{1}{N})^2|^2=|(x+k/N+iy)^2+(1/N)^2|^2$
$\displaystyle =[(x+k/N)^2+(1/N)^2-y^2]^2+4(x+k/N)^2y^2$
$\displaystyle \geq (15/16)^2[(x+k/N)^2+(1/N)^2]^2+4(x+k/N)^2y^2 \geq (15/16)^2x^4>(15/16)^2\delta^4$

$\displaystyle \implies  \left|\partial_{z}h_N(z)\right| =
\left|\frac{1}{2}\left[\frac{1}{[z^2+(\frac{1}{N})^2]^2}+\frac{1}{N}\sum_{k=1}^{N}
\frac{1}{[(z+\frac{k}{N})^2+(\frac{1}{N})^2]^2} \right] \right| \\ <
\frac{1}{2}\left[\frac{1}{(15/16)^2\delta^4}+\frac{1}{(15/16)^2\delta^4}\right] < \frac{1}{(15/16)^2\delta^4}$

This uniform bound of $\left|\partial_{z}h_N(z)\right|$ gives the curvature bound for all points of $M_N$ that are at least a distance $\delta$ above the $x_3=0$ plane by (\ref{curveq}). Let this curvature bound be $C_{\delta}^{(1)}$.

Now, at all points $p \in M_N$ that are at least a distance $\delta$ away from the $x_3-$axis (i.e. outside a cylinder about the $x_3-$axis of radius $\delta$), Heinz's curvature estimate for graphs (11.7 in \cite{Os}) applied to components of $M_N$ over disks of radius $\delta / 2$ in the $\{x_3=0\}$ plane, which are guaranteed to be graphs over the $\{ x_3 = 0 \}$ plane by Theorem \ref{t:main1}(b), gives a uniform curvature bound, $C_{\delta}^{(2)}$.

Hence, $C_{\delta}= \max\{C_{\delta}^{(1)}, C_{\delta}^{(2)}\}$ is the uniform curvature bound in Theorem \ref{t:main1}(c).

Theorem \ref{t:main1}(d) follows from \eqr{e:boundary} and \eqr{e:boundaryinfinity}.

\subsection{Proof of Theorem \ref{t:main}}

We will need the following lemma that gives us convergence.

\begin{Lem}
\label{convergencelem}
Consider the sequence of embedded minimal disks $\{M_N\}$ given by Theorem \ref{t:main1}. Let $W=\mathcal{C} \cup H$ where $\mathcal{C}$ is the cylinder $\{ (x_1, x_2, x_3) |\, x_1^2+x_2^2 \leq r_0^2, -1/2 < x_3 < 1/2 \}$ with $r_0$ determined by Theorem \ref{t:main1}(d) and $H=\RR^2 \times [-1/2,0]$ (a horizontal block of the half space below the $\{x_3=0\}$ axis).
\begin{enumerate}
\item[(a)] $\{M_N\}$, as a sequence of minimal laminations, has a subsequence that converges to a limit lamination on compact subsets of $W$ away from $\dd \{(0,0,t) | -1/2 \leq t \leq 0\}$ in the $C^{\alpha}$ topology for any $\alpha <1$. \\
\item[(b)] This subsequence of embedded minimal disks has a further subsequence $\{M_{N_i}\}$ such that the leaves converge uniformly in the $C^k$ topology for all $k$.
\end{enumerate}
\end{Lem}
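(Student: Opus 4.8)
The plan is to obtain both parts from the locally uniform curvature bound of Theorem~\ref{t:main1}(c) together with the compactness theory for embedded minimal disks with locally bounded second fundamental form from \cite{CM5}; the genuinely new point is to arrange the boundary behaviour of $M_N$ (Theorem~\ref{t:main1}(d)) so that this theory applies across the unbounded slab $H$. Concretely, I would first fix $\delta>0$ and work on $W_\delta:=W\setminus B_\delta$, where $B_\delta$ is the $\delta$-neighbourhood of $\{(0,0,t)\,|\,-1/2\le t\le0\}$ as in Theorem~\ref{t:main1}(c); that part gives a constant $C_\delta$ with $\sup_N\sup_{M_N\cap W_\delta}|A_{M_N}|^2\le C_\delta$, so $\{M_N\}$ has uniformly bounded second fundamental form on $W_\delta$. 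Next I would record, using Theorem~\ref{t:main1}(d), that $\partial M_N$ leaves every fixed compact subset of the interior of $W$: inside the cylinder $\cC$ this is immediate since $\partial M_N$ lies outside $\cC$ for all $N$, and inside $H$ it holds because in each slice $\{x_3=t\}$, $-1/2\le t\le0$, the distance from the $x_3$-axis to $\partial M_N$ tends to infinity; the only part of $\partial M_N$ remaining in $W$ is then a curve in the bottom face $\{x_3=-1/2\}\subset\partial W$. Hence on compact subsets of the interior of $W_\delta$ the $M_N$ are, for $N$ large, embedded minimal surfaces with no interior boundary and with a uniform curvature bound.

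Now I would invoke the compactness theorem of \cite{CM5}: such a sequence has a subsequence converging to a minimal lamination of $W_\delta$. The curvature bound furnishes uniform local $C^{1,\alpha}$ estimates for the graphical sheets of the $M_N$, so by the Arzel\`{a}--Ascoli theorem a subsequence of the leaves converges in $C^{1,\beta}$ for every $\beta<1$, and in particular in the $C^\alpha$ topology for every $\alpha<1$, as asserted in (a); the same curvature bound, applied up to the face $\{x_3=-1/2\}$ (after first passing to a subsequence along which the boundary curve lying in that face has a well-defined limiting direction), covers the points of $W_\delta$ on that face. Letting $\delta=\delta_j\downarrow0$ and diagonalising then produces one subsequence of $\{M_N\}$ converging, on compact subsets of $W\setminus\{(0,0,t)\,|\,-1/2\le t\le0\}$ and in $C^\alpha$ for all $\alpha<1$, to a minimal lamination. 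This is part (a).

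For (b) I would bootstrap by elliptic regularity. In any chart in which a sheet of $M_N$ is written as a graph over a disk in a $2$-plane, minimality says that the graphing function solves the minimal surface equation, a smooth quasilinear uniformly elliptic equation whose coefficients and ellipticity constants are controlled by the (now uniformly bounded) gradient. Interior Schauder estimates applied iteratively then give, on every slightly smaller sub-disk, uniform $C^{k,\alpha}$ bounds for all $k$; covering $W\setminus\{(0,0,t)\,|\,-1/2\le t\le0\}$ by countably many such charts and diagonalising over the charts and over $k$ extracts a further subsequence $\{M_{N_i}\}$ whose leaves converge uniformly in $C^k$ for every $k$.

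The main obstacle is part (a), and specifically running the compactness theory of \cite{CM5} --- which is naturally stated on a fixed bounded region --- uniformly across the unbounded slab $H$. This is exactly where the strong form of Theorem~\ref{t:main1}(d) is needed (that $\partial M_N$ actually runs off to infinity in the slices below $\{x_3=0\}$, not merely that it stays outside a fixed cylinder), together with the observation that the only boundary of $M_N$ surviving in $W$ lies in the bottom face $\{x_3=-1/2\}$, so that after a routine localisation and exhaustion the interior estimates apply; once that is in place, the lamination limit, the $\delta\to0$ diagonalisation, and the $C^k$ upgrade in (b) are all standard.
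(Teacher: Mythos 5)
Your proposal is correct and follows essentially the same route as the paper: part (a) via the uniform curvature bound of Theorem \ref{t:main1}(c), the boundary-escape property of Theorem \ref{t:main1}(d), and the local compactness result of \cite{CM5} (the paper cites Proposition B.1) applied on a covering by small balls with a diagonal argument; part (b) via interior elliptic estimates for minimal graphs plus Arzel\`a--Ascoli and another diagonalization. The only cosmetic difference is in (b), where the paper applies Corollary 16.7 of \cite{GT} to the multi-valued graphs over $\{x_3=0\}$ (needing only the height bound from Corollary \ref{c:corembeddings}(i)) rather than Schauder bootstrapping of local graphs over tangent planes, but both are standard and equivalent here.
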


\begin{proof}
To prove Lemma \ref{convergencelem}(a) we cover compact subsets $K$ of $\displaystyle W \setminus \{(0,0,t) | -1/2 \leq t \leq 0\}$ with sufficiently small balls $B_{r_K}$ (with radius $r_K$ depending on the compact subset) such that the covering does not intersect $\{(0,0,t) |\, |t| \leq 1/2 \}$. For each $K$, we take $N$ in the sequence $\{M_N\}$ to be large enough (i.e. $N \geq N_K$ for some $N_K$ depending on $K$) such that $\partial M_N$ is outside $K$. This ensures that in each ball $B_{r_K}$, the leaves of $M_N$ in the ball have boundary contained in $\partial B_{r_K}$. Then we use the uniform curvature bound in Theorem \ref{t:main1}(c), apply Proposition B.1 in \cite{CM5} to each ball and pass to successive subsequences on each ball, to obtain a subsequence that we rename $\{M_{N}\}$ that converges to a lamination, $\mathcal{L}$, with minimal leaves on compact subsets of $W \setminus \{(0,0,t) | -1/2\leq t \leq 0\}$ in the $C^{\alpha}$ topology for any $\alpha <1$.

To prove Lemma \ref{convergencelem}(b) we consider the subsequence $\{M_{N}\}$ obtained above in Lemma \ref{convergencelem}(a) and we recall that by Theorem \ref{t:main1}(b), $M_N \setminus \{(0,0,t) |\, |t| \leq 1/2 \} = {M}_{1,N} \cup {M}_{2,N}$ for multi-valued graphs ${M}_{1,N} , \, {M}_{2,N}$ over the $\{x_3=0\} \setminus \{0\}$ punctured plane. We cover compact subsets, $K$, of $W \setminus \{(0,0,t) |\, |t| \leq 1/2 \}$ with balls $B_{r_K}$ in the same way as in the proof of Lemma \ref{convergencelem}(a) above, such that the covering does not intersect $\{(0,0,t) |\, |t| \leq 1/2 \}$. This ensures that for all $N$, the leaves in the intersection with each ball $B_{r_K}$ are graphical over a subdomain of the $\{x_3=0\} \setminus \{0\}$ punctured plane. Then since by Corollary \ref{c:corembeddings} (i), for all $N$, ${M}_{1,N} \cup {M}_{2,N}$ has bounded maximum distance from the $\{x_3=0\} \setminus \{0\}$ punctured plane , we apply Corollary 16.7 in \cite{GT} to each ball $B_{r_K}$ to obtain uniform bounds (that are functions of $r_K$ only) on the derivatives of all orders of the graphs of the leaves in $(M_N \setminus \{(0,0,t) |\, |t| \leq 1/2 \}) \cap B_{r_K}$. Then, using standard compactness results and a diagonal argument whereby we pass to successive subsequences on each ball, we obtain a subsequence $\{M_{N_i}\}$ that converges uniformly in $C^k$ for all $k$ on compact subsets of $\displaystyle W \setminus \{(0,0,t) | |t| \leq 1/2 \}$.

\end{proof}

Now, to prove Theorem \ref{t:main}, it is sufficient (by scaling) for us to show that there exists a sequence of compact embedded minimal disks $0 \in \Sigma_N \subset  B_{R} \subset \RR^3$ with $\partial \Sigma_N \subset \partial B_{R}$ for some $R>0$. Theorem \ref{t:main1} gives us a sequence of minimal embeddings $M_N= F_N(\Omega_N) \subset \RR^3$ with $F_N (t,0) = (0,0,t)$ for $|t| \leq 1/2$.

We set $R = \min \{ r_0/2 , 1/4 \}$ where $r_0$ is given by Theorem \ref{t:main1}(d) and we let $\Sigma_{N_i} = B_{R} \cap M_{N_i}$, where the sequence $N_i$ is determined by Lemma \ref{convergencelem}. We rename this sequence $\{\Sigma_N\}$.

From Theorem \ref{t:main1}(d), we see that $\partial \Sigma_{N} \subset \partial B_R$.
And from the properties satisfied by $M_{N}$ in Theorem \ref{t:main1}, Theorem \ref{t:main} (a),(b) and (c) follow immediately.

Now, we note that Corollary \ref{c:corembeddings} (iii) and the smooth convergence of the leaves in Lemma \ref{convergencelem}(b), give us that the limit minimal lamination $\mathcal{L}$ in the upper hemisphere of $B_R$ consists of a leaf in the upper hemisphere, $\Sigma$, such that $\Sigma \setminus \{x_3-\text{axis}\}= \Sigma' \cup \Sigma''$, where $\Sigma'$ and $\Sigma''$ are multi-valued graphs.

By \eqr{e:dyF} and \eqr{e:graphical}, the horizontal slices $\{ x_3 = x \} \cap \Sigma'$ and $\{ x_3 = x \} \cap \Sigma''$ are graphs in the $\{ x_3 = x \}$ plane over the line in the direction
\begin{equation}    \label{e:infv0}
    \lim_{N \to \infty} \partial_yF_N(x,0) = \lim_{N \to \infty}(\sin u_N (x,0) , - \cos u_N (x,0) , 0) \, .
\end{equation}

We note that from \eqr{dyvN}, by the Cauchy-Riemann equations, $\forall N>0,$
$ \label{dxuN} \partial_x u_N (x,0) = \partial_y v_N (x,0) $
\begin{equation}= \displaystyle \frac{1}{2}\left[\frac{1}{(x^2 +(\frac{1}{N})^2)^2}+ \frac{1}{N}\sum_{k=1}^{N} \frac{1}{((x+k/N)^2 +(\frac{1}{N})^2)^2}\right] > 0 \end{equation}
$\implies u_N(x,0)$ is monotonically increasing w.r.t. $x$, for each fixed $N$.

Therefore, for $0< t < R$ the angle turned by the line in \eqr{e:infv0} for a change in $x$ from $t$ to $2t$ is:
\begin{equation}    \label{e:infv}
    \lim_{N \to \infty} |u_N (2t,0) - u_N (t,0)| = \lim_{N \to \infty} \left| \int_t^{2t}\partial_x u_N(x,0) \, dx \right| \end{equation}
    $\displaystyle = \lim_{N \to \infty} \int_t^{2t} \frac{1}{2}\left[\frac{1}{(x^2 +(\frac{1}{N})^2)^2}+ \frac{1}{N}\sum_{k=1}^{N} \frac{1}{((x+k/N)^2 +(\frac{1}{N})^2)^2}\right] \, dx \\ \geq \lim_{N \to \infty} \int_t^{2t} \frac{1}{2}\frac{1}{(x^2 +(\frac{1}{N})^2)^2} \, dx$ $\displaystyle = \lim_{N \to \infty} \frac{N^2}{4}\left[\frac{x}{x^2+(\frac{1}{N})^2}+ N \arctan\left( Nx \right) \right]_{t}^{2t} = \frac{7}{48t^3}
    \, .$

Hence we see that, for $0< t < R$, $\{t < |x_3| < 2t \} \cap \Sigma'$ and $\{t < |x_3| < 2t \} \cap \Sigma''$ both contain an embedded $S_t$-valued graph where $S_t \geq \frac{7}{96 \pi t^3} \to
\infty$ as $t\to 0$ . It follows that $\Sigma'$ and $\Sigma''$ must both spiral into the $\{ x_3 =0 \}$ plane. In addition, a Harnack inequality in Proposition II.2.12 in \cite{CM2} gives a lower bound on the vertical separation of the sheets in both $\Sigma'$ and $\Sigma''$, for each compact subset above the $\{ x_3 =0 \}$ plane. This shows that the spiralling into the $\{ x_3 =0 \}$ plane occurs with multiplicity one.

Finally, we show that the minimal lamination $\mathcal{L}$ in the lower hemisphere of $B_R \setminus \{(0,0,t) |\, -R \leq t \leq 0 \}$ consists of a foliation by parallel planes.

We consider the sequence of embedded minimal disks $M_N$ given by Theorem \ref{t:main1} and we recall that by Corollary \ref{c:corembeddings} (iii), $M_N \setminus \{(0,0,t) |\, |t| \leq 1/2 \} = {M}_{1,N} \cup {M}_{2,N}$ for multi-valued graphs ${M}_{1,N} , \, {M}_{2,N}$ over the $\{x_3=0\} \setminus \{0\}$ punctured plane.

For arbitrary $-\frac{1}{2} \leq t \leq 0$, fixed $j=1$ or $2$ and for all $N$ we define $\Gamma_{j,N}(t)$ to be the component of $M_{j,N}$ that is contained between the planes $\{ x_3 =t \}$ and $\{ x_3 = t+\epsilon_N \}$ where $\epsilon_N$ is such that the tangent vector $\partial_yF_N(t,0)$ to $M_{N} \cap \{ x_3 =t\}$  at the $x_3$ axis (recall that this intersection is a graph in the $\{ x_3 =t\}$ plane over the line in the direction $\partial_yF_N(t,0) = (\sin u_N (t,0) , - \cos u_N (t,0) , 0)$ by \eqr{e:dyF} and \eqr{e:graphical}) turns through an angle of $4\pi$ to the direction of the tangent vector $\partial_yF_N(t+\epsilon_N,0)$ to $M_N \cap \{ x_3 =t+\epsilon_N \}$ at the $x_3$ axis, as $x$ increases from $t$ to $t+\epsilon_N$ (See Figure \ref{fourpiturn}).
This definition ensures that $\Gamma_{j,N}(t)=\{t \leq x_3 \leq t+\epsilon_N \} \cap M_{j,N}$ is a graph over the $\{x_3=0\} \setminus \{0\}$ punctured plane such that the level sets $M_{j,N} \cap \{ x_3 =x \}$ sweep out an angle of magnitude between $3\pi$ and $5\pi$ for $t\leq x \leq t+ \epsilon_N$.
\begin{center}
\begin{figure}
 \includegraphics[width=0.5\textwidth]{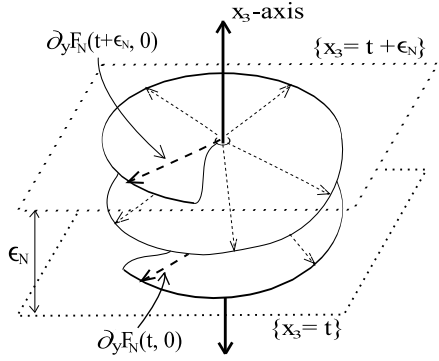}
 \caption{Diagram of $\Gamma_{j,N}(t)$ \label{fourpiturn}}
\end{figure}
\end{center}
Now we show that $\epsilon_N \to 0$ as $N \to \infty$.

For small $s$, the angle turned by the tangent vector $\partial_yF_N(x,0)$ at the $x_3$ axis for a change in $x$ from $t$ to $t+s$ is, by \eqr{dxuN}:\\
$|u_N (t+s,0) - u_N (t,0)| = \left| \int_t^{t+s}\partial_x u_N(x,0) \, dx \right|$

$\displaystyle = \int_t^{t+s} \frac{1}{2}\left[\frac{1}{(x^2 +(\frac{1}{N})^2)^2}+ \frac{1}{N}\sum_{k=1}^{N} \frac{1}{((x+k/N)^2 +(\frac{1}{N})^2)^2}\right] \, dx$

$\displaystyle \geq \frac{1}{2} \int_t^{t+s} \frac{1}{N}\sum_{k=1}^{N} \frac{1}{((x+k/N)^2 +(\frac{1}{N})^2)^2} \, dx \\ = \frac{1}{2} \frac{1}{N}\sum_{k=1}^{N} \frac{1}{((x'+k/N)^2 +(\frac{1}{N})^2)^2}((t+s) -(t))$ (for some $t \leq x' \leq t+s$ by the Mean Value Theorem)

$\dd \geq \frac{1}{2} \frac{1}{N}\frac{s}{((x'+c/N)^2 +(\frac{1}{N})^2)^2}$ (for $1 \leq c \leq N$ chosen so that $x'+c/N \leq 1/N$)

$\dd \geq \frac{1}{2} \frac{1}{N}\frac{s}{((1/N)^2 +(1/N)^2)^2} = \frac{1}{8}N^3s$

Therefore, for any $s$, $\{t \leq x_3 \leq t+s \} \cap M_{j,N}$ contains an
embedded $R_t$-valued graph where $R_t \geq \frac{1}{16 \pi}N^3 s \to \infty$ as $N \to \infty$.
This means that since for all N,  $\Gamma_{j,N}(t)=\{t \leq x_3 \leq t+\epsilon_N \} \cap M_{j,N}$ as defined above is at most $3$-valued, $\epsilon_N \leq 48\pi /N^3 \to 0$ as $N \to \infty$.

Now we have that for each $N$, $\Gamma_{j,N}(t)$ is an embedded minimal graph over the $\{x_3=0\} \setminus \{0\}$ punctured plane by \ref{t:main1}(b), the boundary of each horizontal slice of $\Gamma_{j,N}(t)$ tends to infinity by Theorem \ref{t:main1}(d), and as we have shown above, $\Gamma_{j,N}(t)=\{t \leq x_3 \leq t+\epsilon_N \} \cap M_{j,N}$ is such that $\epsilon_N \to 0$ as $N \to \infty$. Therefore, by Lemma \ref{convergencelem}(b), a subsequence $\{\Gamma_{j,N_i}(t)\}$ converges uniformly on compact subsets in the $C^k$ topology for all $k$ to an entire minimal graph minus the point $(0,0,t)$. By a standard Bernstein type theorem, this limit graph must be a plane with a removable singularity at $(0,0,t)$.
Since $-1/2\leq t \leq 0$ was arbitrary we have that the limit lamination $\mathcal{L}$ below the $\{x_3=0\}$ plane is a foliation by planes parallel to $\{x_3=0\}$ with removable singularities along the negative $x_3$-axis. And by intersecting with $B_R$, we obtain the required result that the lamination of the lower hemisphere of $B_R \setminus \{(0,0,t) |\, -R \leq t \leq 0 \}$ consists of a foliation by parallel planes, each with a removable singularity at the $x_3$-axis. The one exception is that the singularity at the origin is not removable because of the spiralling of the leaf in the upper hemisphere, $\Sigma$, into the $\{x_3=0\}$ plane.

\end{document}